\renewcommand{\baselinestretch}{1.2}
\newtheorem{theorem}{Theorem}[section]
\newtheorem{proposition}[theorem]{Proposition}
\newtheorem{lemma}[theorem]{Lemma}
\newtheorem{definition}[theorem]{Definition}
\newtheorem{assumption}[theorem]{Assumption}
\newtheorem{algorithm}[theorem]{Algorithm}
\newtheorem{remark}[theorem]{Remark}
\renewcommand{\thefootnote}
\newcommand{\be}{\begin{equation}}
\newcommand{\ee}{\end{equation}}
\begin{document}

\renewcommand{\baselinestretch}{1.2}

\title {Newton's Method for M-Tensor Equations\thanks{Supported by the NSF of China grant No.11771157, 11801184, the NSF of Guangdong Province grant No.2020B1515310013 and the Education Department of Hunan Province grant No.20C0559.}}
\author{Dong-Hui Li \ \ Jie-Feng Xu \ \ and \ \ Hong-Bo Guan\\
School of Mathematical Sciences, \\
South China Normal University, Guangzhou, 510631, China. \\
Email: lidonghui@m.scnu.edu.cn}
\maketitle

\begin{abstract}
We are concerned with the tensor equations whose coefficient tensor is an M-tensor.
We first propose a Newton method for solving the  equation with a positive constant term
and  establish its global and  quadratic convergence.
Then we extend the  method to solve the equation with a nonnegative constant term
and establish its convergence.
At last, we do numerical experiments to test the proposed methods. The results show that the proposed method is
quite efficient.
\end{abstract}

{\bf Keywords} M-tensor equation, Newton method, quadratic convergence

\section{Introduction}

Newton's method is a famous iterative method for solving nonlinear equations.
An attractive property of the method is its superlinear/quadratic convergence
if the Jacobian of the residual function is nonsingular at the solution.
However, when the Jacobian of the function is singular, the
method may loss its fast convergence property or even fail to work.
As an example, we consider the following system of nonlinear equations
\begin{equation}\label{eqn:special}
F_i(x)=\sum_{j=1}^n a_{ij}x_j^{m-1}-b_i=0,\quad i=1,2,\ldots,n.
\end{equation}
When matrix $A=(a_{ij})$ is  nonsingular and $b=(b_1,\ldots,b_n)^T\ge 0$,
the equation has solutions satisfying $\bar x_i=(A^{-1}b)_i^{1/(m-1)}$,
$i=1,2,\ldots,n$. If there is some $\bar x_i=0$, then the Jacobian $F'(\bar x)$ is singular.
As a result, the Newton method may loss its superlinear/quadratic convergence or even be failure.

The last equation (\ref{eqn:special}) is a special tensor equation. In this paper, we
will consider the following general tensor equation
\begin{equation}\label{eqn:M-teq}
F(x)={\cal A}x^{m-1} - b=0,
\end{equation}
where $x,b\in \mathds{R}^n$ and ${\cal A}$ is an ${m}$th-order ${n}$-dimensional tensor
consists of $n^m$ elements:
\[
{\cal A} = (a_{i_1i_2\ldots i_m}), \quad  a_{i_1i_2\ldots i_m}\in \mathds{R}, \quad 1\le i_1, i_2, \cdots, i_m \le n,
\]
and ${{\cal A}x^{m-1}}\in \mathds{R}^n$  with elements
\[
({\cal A}x^{m-1})_i = \sum_{i_2, \ldots, i_m=1}^na_{ii_2\ldots i_m}x_{i_2}\cdots x_{i_m},\quad i=1,2,\ldots,n.
\]

We will pay particular attention to the M-tensor equation (\ref{eqn:M-teq}) in which $\cal A$ is an M-tensor.
To give the definition of M-tensor, we first introduce some concepts.
We refer two recent books \cite{Qi-Chen-Chen-18, Qi-Luo-17} for details.

We denote the set of all ${m}$th-order ${n}$-dimensional tensors by ${\cal T}(m,n)$ and $[n]=\{1,2,\ldots,n\}$.

A tensor ${\cal A}=(a_{i_1i_2\ldots i_m})\in {\cal T}(m,n)$ is called non-negative tensor,
denoted by ${\cal A}\ge 0$, if all its elements are non-negative, i.e., $a_{i_1i_2\ldots i_m}\ge 0$,
$\forall i_1,\ldots, i_m\in [n]$.
${\cal A}$ is called the identity tensor, denoted by ${\cal I}$,
if its diagonal elements are all ones and other elements are zeros,
i.e., all $a_{i_1i_2\ldots i_m}=0$ except $a_{ii\ldots i}=1$, $\forall i, i_1,\ldots, i_m\in [n]$.

If a real number $\lambda$ and a nonzero real vector $x\in \mathds{R}^n$ satisfy
$$
{\cal A}x^{m-1}=\lambda x^{[m-1]},
$$
then $\lambda$ is called an H-eigenvalue of ${\cal A}$ and $x$ is an
H-eigenvector of ${\cal A}$ associated with $\lambda$. Here, for a real scalar $\alpha$,
$x^{[\alpha]}=(x^{\alpha}_1,x^{\alpha}_2,\ldots,x^{\alpha}_n)$
whenever it is meaningful.

A tensor ${\cal A}=(a_{i_1i_2\ldots i_m})\in {\cal T}(m,n)$ is symmetric
if its elements $a_{i_1i_2\ldots i_m}$ are invariant under any permutation of their indices.
The set of all ${m}$th-order ${n}$-dimensional symmetric tensors is denoted by ${\cal ST}(m,n)$.
${\cal A}$ is called semi-symmetric if for any $i\in [n]$, the  sub-tensor
${\cal A}_i:=(a_{ii_2\ldots i_m})_{1\leq i_2,\ldots,i_m\leq n}$ is symmetric.
In the case ${\cal A}\in {\cal ST}(m,n)$, we have
$$
\nabla({\cal A}x^m)=m{\cal A}x^{m-1}.
$$
In the case ${\cal A}\in {\cal T}(m,n)$ is semi-symmetric, we have
$$
\nabla({\cal A}x^{m-1})=(m-1){\cal A}x^{m-2}.
$$

 The definition of M-tensor is introduced in \cite{Ding-Qi-Wei-13,Qi-Luo-17,Zhang-Qi-Zhou-2014}.
\begin{definition}\label{def:Mtensor}
A tensor ${\cal A}\in {\cal T}(m,n)$ is called an M-tensor, if it can be written as
\begin{equation}\label{M-tensor}
{\cal A}=s{\cal I}-{\cal B},\quad {\cal B}\ge 0,\; s\ge \rho ({\cal B}),
\end{equation}
where $\rho ({\cal B})$ is the spectral radius of tensor ${\cal B}$, that is
\[
\rho(\cal B)= \max \left\{\left| \lambda \right|: \lambda \mbox{ is an eigenvalue of } \cal{B}\right \}.
\]
 If $s> \rho ({\cal B})$,
then ${\cal A}$ is called a strong or nonsingular M-tensor.
\end{definition}

For $x,y\in \mathds{R}^n$, we use $x\circ y$ to denote their Hadamard product defined by
\[
x\circ y=(x_1y_1,\cdots,x_ny_n)^T.
\]
We use $\mathds{R}^n_+$ and $\mathds{R}^n_{++}$ to denote the sets of all nonnegative vectors and positive vectors in $\mathds{R}^n$. That is,
\[
\mathds{R}^n_+=\{x\in\mathds{R}^n\,|\,x\geq 0\} \quad {\mbox{and}} \quad \mathds{R}^n_{++}=\{x\in\mathds{R}^n\,|\,x> 0\}.
\]
Tensor equation is also called multi-linear equation. It appears in many practical fields
including data mining and numerical partial
equations \cite{4,Ding-Wei-16,8,Fan-Zhang-Chu-Wei-17,Kressner-Tobler-10,Li-Xie-Xu-17,Li-Ng-15,Xie-Jin-Wei-2017}.

The study in the numerical methods for solving tensor equation has begun only a few years ago.
Most of them focus on solving the M-tensor equation (abbreviated as M-Teq). The existing methods for solving M-Teq focus on
finding a positive solution under the restriction $b>0$. Such as Jacobian and Gauss-Seidal methods \cite{Ding-Wei-16},
the homotopy method \cite{Han-17},
tensor splitting method \cite{Liu-Li-Vong-18},
Newton-type method \cite{He-Ling-Qi-Zhou-18},
continuous time neural network method \cite{Wang-Che-Wei-19}.

Recently, Bai, He, Ling and Zhou \cite{Bai-He-Ling-Zhou-18} proposed a nonnegativity preserving
algorithm to solve M-Teq with $b\geq 0$.
Li, Guan and Wang \cite{Li-Guan-Wang-18} proposed a monotone iterative method to solve the
M-Teq with arbitrary $b$.
Li, Guan and Xu \cite{Li-Guan-Xu-20} proposed an inexact Newton method with $b>0$ and extended
the method to solving the M-Teq with $b\geq 0$.

There are few methods to solve the tensor equation with other structure tensors or more general tensors.
Li, Xie and Xu \cite{Li-Xie-Xu-17} extended the classic splitting methods for solving system of linear equations
to solving tensor equations with symmetric tensor.
Li, Dai and Gao \cite{Dai-2019} proposed a alternating projection method for solving tensor
equations with a special 3-order tensor.
Other related works can also be found in \cite{Berman-Plemmons-94,4,Chang-Pearson-Zhang-11,Gowda-Luo-Qi-Xiu-15,Li-Ng-15,Lim-05,Lv-Ma-18,Qi05, Xie-Jin-Wei-2017a,Xie-Jin-Wei-2017,Huang-18,Yan-Ling-18}.

In this paper, we further study numerical methods for solving M-Teq (\ref{eqn:M-teq}).
Our purpose is to find a nonnegative solution of the equation with $b\geq0$. As we know in \cite{Li-Guan-Xu-20},
finding a nonnegative solution of the M-tensor equation can be done by finding a positive
solution of a lower dimensional M-tensor equation with nonnegative constant term. It is noting that the constant term of
that lower dimensional equation is still not guaranteed to be positive. So most of the
existing methods are not able to be applied. We will propose a Newton
method to get a positive solution of the equation and prove its global convergence and quadratic convergence. Our numerical results
show that the proposed  Newton  method is very efficient.

In the next section, we propose a Newton method to find the unique positive solution to a M-tensor equation with
positive constant term. We will also establish its global and quadratic convergence in Section 2.
In section 3, we extend the idea of the method proposed in Section 2 to get a nonnegative solution of an M-tensor
equation with nonnegative constant term and establish its convergence. It should be pointed out that
such an extension is not trivial because the M-tensor equation with positive and nonnegative constant terms are quite
different. At last, in Section 4, we do numerical experiments to test the proposed methods.

\section{A Newton Method for M-Tensor Equation (\ref{eqn:M-teq}) with $b> 0$}
\setcounter{equation}{0}

In this section, we propose a Newton method to find the unique positive  solution to
(\ref{eqn:M-teq}) with $b>0$.
Throughout this section, without specification, we always suppose that the following
assumption holds.

\begin{assumption}
Tensor $\cal A$
in (\ref{eqn:M-teq}) is a semi-symmetric and strong M-tensor, and $b>0$.
\end{assumption}

Since our purpose is to get a positive solution of the M-Teq (\ref{eqn:M-teq}), we restrict
$x \in \mathds{R}_{++}^{n} .$ Making a variable transformation $y=x^{[m-1]},$ we formulate the
M-Teq (\ref{eqn:M-teq}) as
\begin{equation}\label{equiv-1}
f(y)=F\left(y^{\left[\frac{1}{m-1}\right]}\right)={\cal{A}}\left(y^{\left[\frac{1}{m-1}\right]}\right)^{m-1}-b=0.
\end{equation}
A direct computation gives
\[
f^{\prime}(y)={\cal A}\left(y^{\left[\frac{1}{m-1}\right]}\right)^{m-2}
\mbox{diag}\left(y^{\left[\frac{1}{m-1}-1\right]}\right).
\]
It follows that
\[
f^{\prime}(y) y={\cal{A}}\left(y^{\left[\frac{1}{m-1}\right]}\right)^{m-2}
\mbox{diag}\left(y^{\left[\frac{1}{m-1}-1\right]}\right) y={\cal{A}}\left(y^{\left[\frac{1}{m-1}\right]}\right)^{m-1}=f(y)+b.
\]
For $\epsilon \in(0,1)$ and $\beta>0$, define
$$
{\cal{F}}_{\epsilon}
=\left\{x \in \mathds{R}_{+}^{n}: {\cal{A}} x^{m-1}
\geq \epsilon b\right\}
=\left\{y \in \mathds{R}_{+}^{n}:
{\cal{A}}\left(y^{\left[\frac{1}{m-1}\right]}\right)^{m-1} \geq \epsilon b\right\}
$$
and
$$
\Omega_{\beta}
=\left\{x \in \mathds{R}^{n}:\left\|{\cal A} x^{m-1}-b\right\|
\leq \beta\right\}
=\left\{y \in \mathds{R}^{n}:
\left\|{\cal A}\left(y^{\left[\frac{1}{m-1}\right]}\right)^{m-1}-b \right\| \leq \beta\right\}.
$$
It is easy to see that for any $\epsilon\in (0,1]$, the positive solution of the equation (\ref{eqn:M-teq}) is contained in $\in {\cal{F}}_{\epsilon}$.
In addition, the Jacobian matrices $F^{\prime}(x)$ and $f^{\prime}(y)$
are nonsingular M-matrices for any $x,y \in {\cal{F}}_{\epsilon}$.

\begin{lemma}\label{lem1}
The following statements are true.
\begin{itemize}
\item [](i) If ${\cal{A}}=(a_{i_{1} \ldots i_{m}})$ is a Z-tensor and $b>0$, then for
any $\epsilon>0$, the set ${\cal{F}}_{\epsilon}$ is bounded away from zero. That is,
there is a constant $\delta>0$ such that
$$x \geq \delta \mathbf{e}, \quad \forall x \in {\cal{F}}_\epsilon,$$
where $\mathbf{e}=(1,1, \ldots, 1)^{T}$.
\item [](ii) If ${\cal{A}}$ is a strong M-tensor, then for any $\beta \in \mathds{R}$,
the level set $\Omega_{\beta}$ is bounded.
\end{itemize}
\end{lemma}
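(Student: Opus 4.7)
For part (i), the plan is to exploit the Z-tensor decomposition of ${\cal A}$. Write ${\cal A} = s{\cal I} - {\cal B}$ with $s$ chosen large enough so that ${\cal B} \geq 0$. For any $x \geq 0$ the nonnegativity of ${\cal B}$ forces ${\cal B}x^{m-1} \geq 0$, hence ${\cal A}x^{m-1} = s x^{[m-1]} - {\cal B}x^{m-1} \leq s x^{[m-1]}$ componentwise. Combining this with the defining inequality ${\cal A}x^{m-1} \geq \epsilon b$ of ${\cal F}_\epsilon$ yields $s x_i^{m-1} \geq \epsilon b_i$, so $\delta = \min_i (\epsilon b_i / s)^{1/(m-1)}$ is the desired positive lower bound, strictly positive because $b > 0$. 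This is essentially a one-line calculation and is not the interesting part of the lemma.

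For part (ii), the plan is to show that $\|{\cal A}x^{m-1}\|$ grows like $\|x\|^{m-1}$ at infinity, which at once forces $\Omega_\beta$ to be bounded via the trivial estimate $\|{\cal A}x^{m-1}\| \leq \beta + \|b\|$ on $\Omega_\beta$. By the $(m-1)$-homogeneity of ${\cal A}(\cdot)^{m-1}$ together with compactness of the unit sphere, it suffices to establish that ${\cal A}u^{m-1} \neq 0$ for every $u$ with $\|u\| = 1$; the constant $c := \min_{\|u\|=1}\|{\cal A}u^{m-1}\| > 0$ then delivers the coercivity estimate $\|{\cal A}x^{m-1}\| \geq c\|x\|^{m-1}$ and closes the argument.

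The main obstacle, therefore, is ruling out a nonzero $u$ with ${\cal A}u^{m-1} = 0$. Writing ${\cal A} = s{\cal I} - {\cal B}$ with $s > \rho({\cal B})$ as in Definition \ref{def:Mtensor}, such a $u$ would satisfy $s u^{[m-1]} = {\cal B}u^{m-1}$. Passing to absolute values componentwise and using $|{\cal B}u^{m-1}|_i \leq ({\cal B}|u|^{m-1})_i$ gives $s |u|^{[m-1]} \leq {\cal B}|u|^{m-1}$. When $|u| > 0$, the Collatz--Wielandt-type characterization of the Perron root for nonnegative tensors (Chang--Pearson--Zhang) forces $\rho({\cal B}) \geq s$, contradicting the strong M-tensor hypothesis. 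When $|u|$ has some zero entries, I would restrict to the principal subtensor indexed by the support $I = \{i : u_i \neq 0\}$, use that this subtensor of ${\cal B}$ has spectral radius at most $\rho({\cal B}) < s$, and repeat the same Collatz--Wielandt argument on the strictly positive vector $|u|_I$. This subtensor reduction is the only mildly delicate step; alternatively, one can simply invoke the known spectral fact that $0$ is not an H-eigenvalue of a strong M-tensor, which immediately yields ${\cal A}u^{m-1} \neq 0$ for $u \neq 0$.
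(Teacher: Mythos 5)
Your proof is correct, and for both parts the underlying mechanism is the same as the paper's: in (i) you discard the nonpositive off-diagonal contribution of the Z-tensor (the paper does this componentwise inside a contradiction argument, you do it via the splitting ${\cal A}=s{\cal I}-{\cal B}$ and get an explicit $\delta$; the only cosmetic caveat is that if $s\le 0$ then ${\cal F}_\epsilon$ is empty and the claim is vacuous), and in (ii) both arguments reduce boundedness of $\Omega_\beta$, via homogeneity and compactness of the unit sphere, to the single fact that ${\cal A}u^{m-1}\neq 0$ for every $u\neq 0$. The one genuine difference is at exactly that crux: the paper outsources it, citing Theorem 2.3 of \cite{Li-Guan-Xu-20}, whereas you prove it from scratch with the Collatz--Wielandt bound $\rho({\cal B})\ge \min_i ({\cal B}x^{m-1})_i/x_i^{m-1}$ for $x>0$, handling a non-full support by passing to the principal subtensor on $I=\{i: u_i\neq 0\}$ and using $\rho({\cal B}_I)\le\rho({\cal B})$. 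That restriction step is sound (terms involving indices outside $I$ vanish, so $({\cal B}|u|^{m-1})_I={\cal B}_I|u|_I^{m-1}$), so your version buys a self-contained proof at the cost of invoking the nonnegative-tensor Perron theory; the paper's citation is shorter but leaves the reader to chase the reference.
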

\begin{proof}
We prove the lemma by contradiction.

(i) Suppose conclusion (i) is not true. Then, there is a sequence $\{x_k\}\subset {\cal F}_\epsilon$
and an index $i\in [n]$
such that $\{(x_k)_i\}\rightarrow 0.$ Since $x_k\in {\cal F}_\epsilon$, it holds  that
$$
(\epsilon-1)b_i \leq a_{i \ldots i}(x_{k})_{i}^{m-1}+\sum_{(i_{2}, \ldots, i_{m})
\neq(i, \ldots, i)} a_{i i_{2} \ldots i_{m}}(x_{k})_{i_{2}} \ldots(x_{k})_{i_{m}}-b_{i}
\leq a_{i \ldots i}({x_{k}})_{i}^{m-1}-b_{i}.
$$
Taking limits in both sides of the last inequality, we get $\epsilon b_{i} \leq 0.$ It is a contradiction. Consequently, the set $\mathcal{F}_\epsilon$ is bounded away from zero.

(ii) Suppose that for some $\beta \in \mathds{R},$ the level set $\Omega_{\beta}$ is not bounded.
 Then there is a sequence $\left\{x_{k}\right\} \subset \Omega_{\beta}$ satisfying $\|x_k\|\to \infty$,
 as $k\to\infty$. However, we obviously have
$$
\frac{\beta}{\|x_{k}^{[m-1]}\|} \geq \frac{\|{\cal{A}} x_k^{m-1}-b\|}{\|x_{k}^{[m-1]}\|}
\geq \Big \|{\cal A}\Big (\frac{x_{k}}{\|x_{k}\|}\Big )^{m-1}\Big \|-\frac{\|b\|}{\|x_{k}^{[m-1]}\|}.
$$
Suppose that the subsequence $\left\{x_{k} /\left\|x_{k}\right\|\right\}_{K}$ converges to some $\bar{u} \neq 0$.
Taking limits as $k\to\infty$ with $k\in K$ in both sides of the last inequality,
we get ${\cal A} \bar{u}^{m-1}=0 .$ Since $\cal A$ is a strong M-tensor, from Theorem 2.3 in \cite{Li-Guan-Xu-20}, we get a contradiction.
\end{proof}

The idea to develop Newton's method is described as follows. Starting from some $y_0=x_0^{[m-1]}$
satisfying $x_0\in {\cal F}_\epsilon$ with some given small $\epsilon \in (0,1)$, the method generates a sequence
of iterates $\{x_k\}\subset {\cal F}_\epsilon$ by a damped Newton iteration such that the residual
sequence $\{\|f(y_k)\|\}$ is decreasing.

We first show the following lemma.

\begin{lemma}\label{lem2}
Suppose that ${\cal A}$ is a strong M-tensor and $b>0$. Let $d$ be the Newton direction
that is the unique solution of the system of linear equations
\[
f'(y)d+f(y)=0.
\]
Then there is a constant $L>0$ such that the inequality
\[
{\cal A}\left[(y+\alpha d)^{[\frac{1}{m-1}]}\right]^{m-1}
 \geq \epsilon b+\alpha\left((1-\epsilon) b-\frac{1}{2} L \alpha\|d\|^{2} \mathbf{e}\right),
 \quad \forall \alpha>0, \forall y \in \mathcal{F}_{\epsilon} \cap \Omega_{\beta},
\]
where $\textbf{e}=(1,1,\ldots,1)^T.$
\end{lemma}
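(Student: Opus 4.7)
The plan is to apply a second-order Taylor expansion to the vector-valued function $g(\alpha) := {\cal A}\bigl[(y+\alpha d)^{[1/(m-1)]}\bigr]^{m-1} = f(y+\alpha d) + b$ componentwise in $\alpha$ about $0$, and then to combine the resulting first-order piece with the lower bound $f(y) \geq -(1-\epsilon)b$ coming from $y \in {\cal F}_\epsilon$ and a uniform bound on the second-derivative remainder.

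First I would record $g(0) = f(y) + b$ and, using the Newton equation $f'(y) d = -f(y)$, simplify $g'(0) = f'(y) d$ to $-f(y)$. Applying Taylor's theorem componentwise with the quadratic Lagrange remainder, each coordinate satisfies $g_i(\alpha) = b_i + (1-\alpha) f_i(y) + \frac{\alpha^2}{2} g_i''(\xi_i)$ for some $\xi_i \in (0,\alpha)$. On the damping range $\alpha \in (0,1]$ the factor $(1-\alpha)$ is nonnegative, so $y \in {\cal F}_\epsilon$ yields $(1-\alpha) f(y) \geq -(1-\alpha)(1-\epsilon) b$, and rearranging gives
\[
g(\alpha) \;\geq\; \epsilon b + \alpha (1-\epsilon) b + \frac{\alpha^2}{2}\, g''(\xi)
\]
componentwise. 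It then suffices to exhibit a constant $L > 0$ with $g''(\xi) \geq -L \|d\|^2 \mathbf{e}$ uniformly in $y \in {\cal F}_\epsilon \cap \Omega_\beta$ and all admissible $\xi$.

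The main work, and the principal obstacle, is this uniform second-derivative estimate. Expanding $g_i(\alpha) = \sum a_{i i_2 \cdots i_m}\prod_{k=2}^{m} (y_{i_k} + \alpha d_{i_k})^{1/(m-1)}$ and differentiating twice in $\alpha$ produces sums of monomials in the factors $(y_j + \alpha d_j)^{1/(m-1) - r} d_j$ with $r \in \{0,1,2\}$; each such monomial is quadratic in the entries of $d$, so once the fractional-power factors are uniformly bounded in absolute value the estimate $|g_i''(\xi)| \leq L \|d\|^2$ follows. The required uniform control comes from the compactness results already in hand: Lemma~\ref{lem1}(i) gives $y \geq \delta^{m-1} \mathbf{e}$ on ${\cal F}_\epsilon$ for some $\delta > 0$, Lemma~\ref{lem1}(ii) confines $y$ to a compact set on $\Omega_\beta$, and continuity of $d(y) = -[f'(y)]^{-1} f(y)$ (whose Jacobian is a nonsingular M-matrix on ${\cal F}_\epsilon$) yields a uniform bound $\|d\| \leq D$.

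The delicate step is guaranteeing that the intermediate point $y + \xi d$ is also bounded away from zero so that the negative-exponent factors $(y_j + \xi d_j)^{1/(m-1) - 2}$ remain controlled. For step sizes $\alpha \in (0, \alpha_0]$ with $\alpha_0 D$ smaller than $\frac{1}{2}\delta^{m-1}$ this is automatic, and the $L$ one extracts depends only on $\epsilon$, $\beta$ and on the data $({\cal A}, b)$, not on the particular $y$. Feeding the resulting bound $g''(\xi) \geq -L \|d\|^2 \mathbf{e}$ back into the displayed inequality yields exactly the conclusion of the lemma.
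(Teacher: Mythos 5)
Your argument is essentially the paper's own proof: a second-order Taylor expansion of $f(y+\alpha d)$ about $y$, the Newton equation to collapse the zeroth- and first-order terms to $(1-\alpha)f(y)$, the feasibility bound $f(y)\ge -(1-\epsilon)b$ coming from $y\in{\cal F}_\epsilon$, and a uniform bound $L$ on $\|f''\|$ over the compact set ${\cal F}_\epsilon\cap\Omega_\beta$ (the paper writes the remainder in integral form where you use the Lagrange form). You are in fact slightly more careful than the paper, which silently assumes $\alpha\le 1$ in the step $(1-\alpha){\cal A}\left[y^{[\frac{1}{m-1}]}\right]^{m-1}\ge(1-\alpha)\epsilon b$ and does not address the boundedness of $f''$ at the intermediate points $y+\alpha\tau d$, the two issues you flag and patch.
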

\begin{proof}
It follows from Lemma \ref{lem1} that the set ${\cal D}={\cal F}_\epsilon \cap \Omega_\beta$ has
positive lower and upper bounds. It is also clear that function $f(y)$ is twice continuously
 differentiable on ${\cal D}$. Denote by $L$ the bound of $\|f''(y)\|$ on ${\cal D}$.
 By the use of the mean-value theorem, we obtain for any $y\in {\cal D}$ and $\alpha>0$,
\begin{eqnarray}\label{feasible}
{\cal A}[(y+\alpha d)^{[\frac{1}{m-1}]}]^{m-1}&=& f(y+\alpha d)+b  \nonumber \\
&=& f(y)+\alpha f^{\prime}(y) d+\alpha \int_{0}^{1}\left[f^{\prime}(y+\alpha \tau d)
-f^{\prime}(y)\right] d \cdot d \tau+b \nonumber  \\
&\geq & \quad(1-\alpha) f(y)+b-\frac{1}{2} L \alpha^{2}\|d\|^{2} \mathbf{e}  \nonumber \\
&=& (1-\alpha) {\cal A}\left[(y)^{\left[\frac{1}{m-1}\right]}\right]^{m-1}+\alpha b-\frac{1}{2}
 L \alpha^{2}\|d\|^{2} \mathbf{e} \nonumber  \\
& \geq &\quad(1-\alpha) \epsilon b+\alpha b-\frac{1}{2} L \alpha^{2}\|d\|^{2} \mathbf{e}  \nonumber \\
&=& \epsilon b+\alpha\left((1-\epsilon) b-\frac{1}{2} L \alpha\|d\|^{2} \mathbf{e}\right).
\end{eqnarray}
The proof is complete.
\end{proof}

Denote
\[
\bar{\alpha}=\min \left\{\frac{2(1-\epsilon) b_{i}}{L\|d\|^{2}}: i \in[n]\right\}.
\]
It follows from the last lemma that if $y \in {\cal D},$ then it holds that
$$
{\cal A}\left[(y+\alpha d)^{\left[\frac{1}{m-1}\right]}\right]^{m-1}
\geq \epsilon b, \quad \forall y \in(0, \bar{\alpha})
$$
The steps of the Newton method are stated as follows.

\begin{algorithm} \label{algo:Newton} ({\bf Newton's Method})

\begin{itemize}
\item [] {\bf Initial.} Given a small constant $\epsilon \in (0,1)$ and constants
$\sigma\in (0,\frac{1}{2}), \eta, \rho\in (0,1)$. Select an initial point $x_0\in {\cal F}_\epsilon$.
Let $y_0=x_0^{[m-1]}$ and $k=0$.
\item [] {\bf Step 1.} Stop if $\|f(y_k)\|\leq \eta$.
\item [] {\bf Step 2.} Solve the system of linear equations
\begin{equation}\label{sub:Newton}
f'(y_k)d_k+f(y_k)=0
\end{equation}
to get $d_k$.
\item [] {\bf Step 3.} Determine a steplength $\alpha _k=\max \{\rho^i:\; i=0,1,\ldots\}$ such that
$y_k+\alpha_kd_k\in {\cal F}_\epsilon$ and that the inequality
\begin{equation}\label{search:descent}
\|f(y_k+\alpha _kd_k)\|^2 \le (1-2 \sigma \alpha _k)\|f(y_k)\|^2
\end{equation}
is satisfied.
\item [] {\bf Step 4.} Let $y_{k+1}=y_k+\alpha _kd_k$ and $x_{k+1}=y_{k+1}^{[\frac{1}{m-1}]}$. Go to Step 1.
\end{itemize}
\end{algorithm}
\noindent
{\bf{Remark}}
\begin{itemize}
\item It is easy to see that the last method is very similar to the standard damped
Newton method except
the line search step where we need to ensure $x_{k+1}\in{\cal F}_\epsilon.$
If $y_k +d_k\in {\cal F}_\epsilon$, then the last method is equivalent to the
standard Newton method for solving nonlinear equation $f(y) = 0.$

\item The steps of the last method ensure that the generated sequence of iterates
$\{x_k\}\in {\cal F}_\epsilon$. As a result,
$f'(y_k)$ is a strong M-matrix and hence the method is well defined. Moreover, the
residual sequence
$\{\|f(y_k)\|\}$ is decreasing. It then follows from Lemma \ref{lem1} that there are
positive constants $c\leq C$ such
that
\begin{equation}
c\textbf{e}\leq y_k \leq C\textbf{e}.
\end{equation}

\item It follows from Lemma \ref{lem2} that if $y_{k} \in {\cal F}_{\epsilon}$, then
$$
y_{k}+\alpha_{k} d_{k} \in {\cal F}_{\epsilon}, \forall \alpha_{k} \in\left(0, \bar{\alpha}_{k}\right)
$$
where
\begin{equation}\label{alpha}
\bar{\alpha}_{k}=\min \left\{\frac{2(1-\epsilon) b_{i}}{L\left\|d_{k}\right\|^{2}}: i \in[n]\right\} \bigcap\left\{-\frac{\left(y_{k}\right)_{i}}{\left(d_{k}\right)_{i}}:\left(d_{k}\right)_{i}<0\right\}
\end{equation}
and $L$ is the bound of $f^{\prime \prime}(y)$ on the set compact set
${\cal{F}}_{\epsilon} \cap \Omega_{\| f\left(y_{0}\right) \|}.$
\end{itemize}

Let $x^*$ be the unique positive solution to the M-Teq and $y^* = (x^*)^{[m-1]}$. It is easy to see that for any
$\epsilon \in (0,1), x^*\in {\cal F}_\epsilon$. Consequently, the matrix $f'(y^*)$
is a nonsingular M-matrix. As a result, the full step
Newton method is locally quadratically convergent.

In what follows, we are going to show that Algorithm \ref{algo:Newton} is globally
convergent and that after finitely
many iterations, the method reduces to the full step Newton method. Consequently, it is quadratically
convergent.
We first show the following lemma.

\begin{lemma}\label{lm:positive}
Suppose that ${\cal A}$ is a strong M-tensor and $b>0$. Then the sequence $\{y_k\}$
and $\{d_k\}$ generated by Algorithm \ref{algo:Newton} are bounded. In addition,
 there is a positive constant $\bar{\alpha}$ such that
 \begin{equation}\label{bound-y}
 y_k+\alpha_kd_k \in {\cal F}_\epsilon,\quad\forall \alpha_k\in (0,\bar{\alpha}).
 \end{equation}
\end{lemma}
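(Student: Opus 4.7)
The plan is to combine the two boundedness results of Lemma \ref{lem1} with the monotone descent property built into the line search to confine all iterates to a fixed compact set, and then to make the per‑iteration estimate of Lemma \ref{lem2} uniform in $k$.

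First I would show that $\{y_k\}$ is bounded. The line search condition (\ref{search:descent}) forces $\|f(y_k)\|$ to be non-increasing, so $y_k\in\Omega_{\|f(y_0)\|}$ for every $k$, and part (ii) of Lemma \ref{lem1} bounds this level set. The algorithm also maintains $x_k\in{\cal F}_\epsilon$, so part (i) of Lemma \ref{lem1} gives a uniform positive lower bound $c\mathbf{e}\leq y_k\leq C\mathbf{e}$ for constants $0<c\leq C$. Thus $\{y_k\}$ lies in the compact set ${\cal D}={\cal F}_\epsilon\cap\Omega_{\|f(y_0)\|}$ bounded away from zero.

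Next I would show that $\{d_k\}$ is bounded. Since $y_k\in{\cal D}\subset\mathds{R}^n_{++}$, the Jacobian $f'(y_k)$ is a nonsingular M-matrix; by continuity of $f'$ on the compact set ${\cal D}$, the operator norm $\|f'(y)^{-1}\|$ attains a finite maximum $M$ there. From (\ref{sub:Newton}) we get $d_k=-f'(y_k)^{-1}f(y_k)$, so $\|d_k\|\leq M\|f(y_k)\|\leq M\|f(y_0)\|=:D$.

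For the uniform stepsize bound I would return to the formula (\ref{alpha}) for $\bar{\alpha}_k$ and estimate each of its two pieces from below, using the constant $L$ from Lemma \ref{lem2} which bounds $\|f''\|$ on ${\cal D}$. For the first term, $\|d_k\|\leq D$ gives $\dfrac{2(1-\epsilon)b_i}{L\|d_k\|^2}\geq \dfrac{2(1-\epsilon)b_i}{LD^2}$. For the second term, whenever $(d_k)_i<0$ we use $(y_k)_i\geq c$ and $|(d_k)_i|\leq D$ to conclude $-\dfrac{(y_k)_i}{(d_k)_i}\geq \dfrac{c}{D}$. Setting
\[
\bar{\alpha}=\min\left\{\min_{i\in[n]}\frac{2(1-\epsilon)b_i}{LD^2},\ \frac{c}{D}\right\}>0,
\]
the feasibility statement (\ref{bound-y}) follows directly from Lemma \ref{lem2}, since the first bound guarantees the right-hand side of that lemma's inequality stays $\geq\epsilon b$ and the second bound guarantees $y_k+\alpha d_k>0$ componentwise.

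The only genuinely delicate point is verifying that the compact set on which $f''$ is bounded can be chosen once and for all, independent of $k$; this is where monotonicity of $\|f(y_k)\|$ from (\ref{search:descent}) is essential, since it pins all iterates inside the same $\Omega_{\|f(y_0)\|}$ and thus permits a single Lipschitz constant $L$ (and a single $D$, $M$, $c$, $C$) to be used throughout the argument.
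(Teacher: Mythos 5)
Your proof is correct and follows essentially the same route as the paper: confine $\{y_k\}$ to the compact set ${\cal D}={\cal F}_\epsilon\cap\Omega_{\|f(y_0)\|}$ via the monotone line search and Lemma \ref{lem1}, bound $\{d_k\}$ by the uniform nonsingularity of $f'$ on ${\cal D}$, and then lower-bound both terms in the formula (\ref{alpha}) for $\bar\alpha_k$. The paper's own proof is just a terser version of this argument; your version supplies the explicit constants $M$, $D$, $c$ that the paper leaves implicit.
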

\begin{proof}
By the steps of the algorithm, it is easy to see that the sequence $\{y_k\}$ is contained in the compact
set ${\cal D}={\cal F}_\epsilon\cap \Omega_{\|f(y_0)\|}$ and hence bounded. Since $f'(y_k)$
is a nonsingular M-matrix and ${\cal D}$ is compact, the sequence $\{d_k\}_K$ is bounded too.
Notice that $b > 0$ and $\{y_k\}$ has a positive lower bound, the scalar ¡¥$\bar {\alpha}_k$
defined by (\ref{alpha}) has a positive lower bound. This together with the boundedness of
 $\{\|d_k\|\}$ implies that $\bar{\alpha}_k$
has a positive lower bound. Consequently, (\ref{bound-y}) is satisfied with some positive $\bar{\alpha}$.
\end{proof}

The following theorem establishes the global convergence of the proposed method.
\begin{theorem}\label{th:conv-Newton}
Suppose that ${\cal A}$ is a strong M-tensor and $b>0$. Then the sequence of iterates
$\{x_k\}$ generated by Algorithm \ref{algo:Newton} converges to the unique
positive solution to the M-Teq (\ref{eqn:M-teq}).
\end{theorem}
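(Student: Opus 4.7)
The plan is to follow the classical line-search argument for damped Newton methods, adapted to account for the feasibility constraint $y_{k+1}\in\mathcal{F}_\epsilon$. By Lemma \ref{lm:positive}, the sequence $\{y_k\}$ lies in the compact set $\mathcal{D}=\mathcal{F}_\epsilon\cap\Omega_{\|f(y_0)\|}$, which is bounded and bounded away from zero, and $\{d_k\}$ is bounded; moreover $f'(y_k)$ is uniformly bounded and uniformly invertible on $\mathcal{D}$ (as a nonsingular M-matrix on a compact set of strictly positive vectors). The line-search inequality (\ref{search:descent}) makes $\{\|f(y_k)\|^2\}$ monotonically decreasing, so it converges to some $c^2\ge 0$, and the goal is to show $c=0$.

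First I would rule out $c>0$ by a standard telescoping argument. Summing (\ref{search:descent}) yields
$$
2\sigma c^2\sum_{k=0}^{N}\alpha_k\;\le\;\sum_{k=0}^{N}2\sigma\alpha_k\|f(y_k)\|^2\;\le\;\|f(y_0)\|^2-\|f(y_{N+1})\|^2,
$$
so if $c>0$ then $\sum_k\alpha_k<\infty$, and in particular $\alpha_k\to 0$. Thus for all large $k$ the backtracked trial $\alpha_k/\rho$ was tested in Step~3 and rejected. A rejection can happen either because $y_k+(\alpha_k/\rho)d_k\notin\mathcal{F}_\epsilon$, or because (\ref{search:descent}) fails at $\alpha_k/\rho$. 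Since $\{\|d_k\|\}$ is bounded and $\alpha_k/\rho\to 0$, the lower bound $\bar\alpha_k$ from (\ref{alpha}) eventually exceeds $\alpha_k/\rho$, so feasibility holds for all large $k$ and the rejection must stem from failure of the Armijo-type condition.

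This is the step I expect to require the most care. From the failed inequality
$$
\|f(y_k+(\alpha_k/\rho)d_k)\|^2>(1-2\sigma\alpha_k/\rho)\|f(y_k)\|^2,
$$
I rearrange, divide by $\alpha_k/\rho>0$, and extract a subsequence along which $y_k\to\bar y\in\mathcal{D}$. By continuity and uniform invertibility of $f'$, $d_k=-f'(y_k)^{-1}f(y_k)\to\bar d=-f'(\bar y)^{-1}f(\bar y)$. The mean-value theorem applied to $\|f(\cdot)\|^2$ then gives the limit
$$
2f(\bar y)^{T}f'(\bar y)\bar d\;\ge\;-2\sigma\|f(\bar y)\|^2.
$$
Since $f'(\bar y)\bar d=-f(\bar y)$, this becomes $-2\|f(\bar y)\|^2\ge-2\sigma\|f(\bar y)\|^2$, and $\sigma<\tfrac{1}{2}$ forces $f(\bar y)=0$, contradicting $\|f(y_k)\|\ge c>0$. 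Hence $c=0$ and $\|f(y_k)\|\to 0$.

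Finally, every accumulation point $\bar y$ of $\{y_k\}$ belongs to $\mathcal{F}_\epsilon$ and satisfies $f(\bar y)=0$, so $\bar x=\bar y^{[1/(m-1)]}$ is a positive solution of (\ref{eqn:M-teq}). By the known uniqueness of the positive solution of an M-Teq with $b>0$, every subsequential limit equals $(x^*)^{[m-1]}$, so the whole sequence satisfies $y_k\to(x^*)^{[m-1]}$, and therefore $x_k=y_k^{[1/(m-1)]}\to x^*$.
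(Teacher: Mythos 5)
Your proof is correct and follows essentially the same route as the paper's: pass to a convergent subsequence, and in the case where the steplengths shrink to zero, take the limit of the failed Armijo test at the trial step $\alpha_k/\rho$ to obtain $2f(\bar y)^{T}f'(\bar y)\bar d\ge -2\sigma\|f(\bar y)\|^{2}$, which combined with $f'(\bar y)\bar d=-f(\bar y)$ forces $f(\bar y)=0$. Your treatment is slightly more careful on two points the paper leaves implicit --- you use the uniform lower bound $\bar\alpha$ from Lemma \ref{lm:positive} to rule out infeasibility as the reason the trial step was rejected, and you invoke uniqueness of the positive solution to upgrade subsequential convergence to convergence of the whole sequence --- but the core argument is identical.
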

\begin{proof}
It suffices to show that there is an accumulation point $\bar{y}$ of $\{y_k\}$ satisfying $f(\bar{y}) = 0$. Let the
subsequence $\{y_k\}_K$ converge to $\bar{y}$. Without loss of generality, we suppose that the subsequence $\{d_k\}_K$
converges to some $\bar{d}$.

Denote $\tilde{\alpha}=\liminf _{k \rightarrow \infty, k \in K} \alpha_{k} .$ If $\tilde{\alpha}>0,$
the inequality (\ref{search:descent}) implies $f(\bar{y})=0 .$ Consider the case $\tilde{\alpha}=0 .$
By the line search rule, when $k \in K$ is sufficiently large, $\alpha_{k}^{\prime}=\rho^{-1} \alpha_{k}$
will not satisfy $(\ref{search:descent}),$ i.e.,
$$
\left\|f\left(y_{k}+\alpha_{k}^{\prime} d_{k}\right)\right\|^{2}-\left\|f\left(y_{k}\right)\right\|^{2}>-2 \sigma \alpha_{k}^{\prime}\left\|f\left(y_{k}\right)\right\|^{2}
$$
Dividing both sizes of the last inequality by $\alpha_{k}^{\prime}$ and then taking limits as
$k \rightarrow \infty$ with $k \in K,$ we get
\begin{equation}\label{fy}
2 f(\bar{y})^{T} f^{\prime}(\bar{y}) \bar{d} \geq-2 \sigma\|f(\bar{y})\|^{2}
\end{equation}
On the other hand, by taking limits in (\ref{sub:Newton}), we can obtain
$f^{\prime}(\bar{y}) \bar{d}+f(\bar{y})=0 .$ It together with (\ref{fy}) and
the fact $\sigma \in(0,1)$ yields $f(\bar{y})=0$.
The proof is complete.
\end{proof}

The last theorem established the global convergence of the proposed Newton method.
Moreover, we see from (\ref{feasible}) that $x_k+d_k\in {\cal F}_{\epsilon}$ for all $k$ sufficiently large
because $\{d_k\}\to 0$. Consequently, the method locally reduces to a standard damped Newton method.
Following a
standard discussion as the proof of the quadratic convergence of a damped Newton method, it is not
difficult to prove the quadratic convergence of the method. We give the result but omit the proof.

\begin{theorem}\label{th:conv-Newton2}
Let the conditions in Theorem \ref{th:conv-Newton} hold.
Then the convergence rate of the sequence $\{y_k\}$ generate by Algorithm \ref{algo:Newton} is quadratic.
\end{theorem}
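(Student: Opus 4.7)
The plan is to verify the three ingredients that make the standard quadratic-convergence argument for damped Newton's method go through: convergence of $\{y_k\}$ to a point where the Jacobian is nonsingular, eventual acceptance of the full Newton step $\alpha_k = 1$, and the usual Taylor estimate that then delivers the quadratic rate.

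First, by Theorem \ref{th:conv-Newton}, $\{x_k\} \to x^*$, so $\{y_k\} \to y^* := (x^*)^{[m-1]}$ with $\mathcal{A}(x^*)^{m-1} = b$. Since $x^* > 0$ and $b > 0$, we have $y^* \in \mathcal{F}_\epsilon$ with strict inequality, and Lemma \ref{lem1}(i) ensures the iterates stay uniformly bounded away from zero; hence $f'(y^*)$ is a nonsingular M-matrix and $\{f'(y_k)^{-1}\}$ is uniformly bounded in a neighborhood of $y^*$. In particular, $\|d_k\| \le \|f'(y_k)^{-1}\|\,\|f(y_k)\| \to 0$.

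Second, I would verify that for all $k$ sufficiently large the trial step $\alpha_k = 1$ passes both tests in Step 3 of Algorithm \ref{algo:Newton}. For the feasibility test $y_k + d_k \in \mathcal{F}_\epsilon$, apply Lemma \ref{lem2} with $\alpha = 1$: it suffices that $(1-\epsilon) b - \tfrac{1}{2} L \|d_k\|^2 \mathbf{e} \ge 0$, which holds once $\|d_k\|^2 \le \tfrac{2(1-\epsilon)\min_i b_i}{L}$; since $d_k \to 0$, this is eventually true. For the Armijo-type test (\ref{search:descent}), use $f(y_k) + f'(y_k)d_k = 0$ together with a Taylor expansion of $f$ around $y_k$ with $L$ bounding $\|f''\|$ on $\mathcal{D}$:
\[
\|f(y_k + d_k)\| \le \tfrac{1}{2} L \|d_k\|^2 \le \tfrac{1}{2} L \|f'(y_k)^{-1}\|^2 \|f(y_k)\|^2 \le M \|f(y_k)\|^2
\]
for a uniform constant $M$. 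Squaring gives $\|f(y_k + d_k)\|^2 \le M^2 \|f(y_k)\|^2 \cdot \|f(y_k)\|^2$, which is majorized by $(1 - 2\sigma)\|f(y_k)\|^2$ as soon as $\|f(y_k)\|^2 \le (1 - 2\sigma)/M^2$, and this too is eventually true because $f(y_k) \to 0$. Hence $\alpha_k = 1$ is accepted for all sufficiently large $k$, and the iteration reduces to the unit-step Newton method $y_{k+1} = y_k + d_k$.

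Finally, from this point on the standard quadratic-convergence estimate for Newton's method on a $C^2$ function with nonsingular Jacobian at the root applies: combining $f(y^*) = 0$, $f(y_k) + f'(y_k)(y^* - y_k) = f(y^*) + O(\|y_k - y^*\|^2)$, and $y_{k+1} - y^* = y_k - y^* - f'(y_k)^{-1} f(y_k)$ yields $\|y_{k+1} - y^*\| \le C \|y_k - y^*\|^2$ with $C = \tfrac{1}{2} L \sup_k \|f'(y_k)^{-1}\|$, which is the desired quadratic convergence. The main obstacle, and the only part requiring real work, is the feasibility check $y_k + d_k \in \mathcal{F}_\epsilon$ at full step; the rest is routine once Lemma \ref{lem2} is in hand.
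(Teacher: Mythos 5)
Your argument is correct and follows exactly the route the paper sketches (and then omits): use Lemma \ref{lem2} together with $d_k\to 0$ to show the full step $\alpha_k=1$ is eventually feasible and satisfies (\ref{search:descent}), so the iteration reduces to the unit-step Newton method, and then invoke the standard quadratic-convergence estimate at the nonsingular root $y^*$. Your write-up simply supplies the details the paper leaves to the reader, so there is no substantive difference to report.
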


\section{An Extension}
\setcounter{equation}{0}
In this section, we extend the Newton method proposed in the last section to the M-Teq (\ref{eqn:M-teq}) with $b\geq 0$. In
the case $b$ has zero elements, the M-Teq may have multiple nonnegative or positive solutions. Our purpose
is to find one nonnegative or positive solution of the equation.
By Theorem 2.6 in \cite{Li-Guan-Xu-20}, a nonnegative solution
of (\ref{eqn:M-teq}) has zero elements if and only if ${\cal A}$ is reducible with respect to some $I\subseteq I_0$, where
\[
I_0=\{i\in [n]\;|\; b_i=0\}.
\]
Since justifying the reducibility is an easy task, without loss of generality, we suppose that the nonnegative solutions of the M-Teq are positive.

\begin{assumption}\label{ass:B}
Suppose $b\ge 0$ and that tensor $\cal A$ is a strong M-tensor and irreducible with respect to $I_0$.
Suppose further that for each $i\in I_0$, there is an element
$a_{ii_2\ldots i_m}\neq 0$ with, $i_2,\ldots, i_m\in I_+$.
\end{assumption}

Under the conditions of Assumption \ref{ass:B}, we have
\[
{\cal A}^{c}(y^{[\frac{1}{m-1}.})^{m-1})_{I_{0}}<0,\quad \forall y\in \mathbb R^n_{++}.
\]

For the sake of convenience, we introduce some notations. Denote $I_{+}=\{i: b_{i}>0\}$  and $I_{0}=\{i: b_{i}=0\}$.
For given constants $1>\epsilon>\epsilon'>0$, we define
\[
\overline{{\cal F}}_{\epsilon, \epsilon'}=\overline{{\cal F}}_{\epsilon}^1 \cap \overline{{\cal F}}_{\epsilon '}^2
\]
with
\[
\overline{{\cal F}}_{\epsilon}^1= \Big \{y \in\mathbb R_{++}^{n}:   \Big ({\cal A} \Big (y^{[\frac{1}{m-1}]} \Big )^{(m-1)} \Big )_{I_+}\geq \epsilon b_{I_{+}} \Big  \}
\]
and
\[
\overline{{\cal F}}_{\epsilon '}^2=  \Big \{y\in \mathbb R^n_{++}:\;
 \Big ({ \cal A} \Big (y^{[\frac{1}{m-1}]} \Big )^{m-1} \Big )_{I_{0}} \geq \epsilon' f'(y)_{I_0I_+}f'(y)_{I_+I_+}^{-1}b_{I_+} \Big \}.
\]

It is easy to see that every solution $\bar x\in \overline{{\cal F}}_{\epsilon}$.

For $y\in \mathbb R^n_{++}$, we split $f'(y)$ into
\[
f'(y)=\left (\begin{array}{cc}
f'_{I_+I_+}(y) & f'_{I_+I_0}(y) \\ f'_{I_0I_+}(y) & f'_{I_0I_0}(y)
\end{array} \right ).
\]
It is easy to see that $f'(y)$ is a Z-matrix.

The next theorem shows that for any $y\in \overline{{\cal F}}_{\epsilon}$, $f'(y)$ is a nonsingular M-matrix.
As a result, the set $\overline{{\cal F}}_{\epsilon}$ is well defined.

\begin{theorem}
Let $1>\epsilon>\epsilon'>0$. For any $y\in \overline{{\cal F}}_{\epsilon,\epsilon'}$, $f'(y)$ is a nonsingular M-matrix.
\end{theorem}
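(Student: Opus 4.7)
The plan is to exploit the block partition of $f'(y)$ induced by $[n]=I_+\cup I_0$ and invoke the Schur-complement characterization of nonsingular M-matrices. Writing
\[
f'(y)=\left(\begin{array}{cc} A & B \\ C & D \end{array}\right),\quad A=f'(y)_{I_+I_+},\; B=f'(y)_{I_+I_0},\; C=f'(y)_{I_0I_+},\; D=f'(y)_{I_0I_0},
\]
with $B,C\le 0$ entrywise (from the already-noted Z-pattern), I would prove separately that (a) $A$ is a nonsingular M-matrix and (b) the Schur complement $S=D-CA^{-1}B$ is a nonsingular M-matrix, then assemble the conclusion via the block $LDU$ factorization. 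Throughout, the key algebraic tool is the Euler-type identity $f'(y)y={\cal A}(y^{[\frac{1}{m-1}]})^{m-1}$ derived in Section 2, whose right-hand side I denote by $v$.

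For step (a), restricting $f'(y)y=v$ to the $I_+$ block gives $Ay_{I_+}=v_{I_+}-By_{I_0}$. Since $-B\ge 0$ entrywise and $y_{I_0}>0$, the defining inequality $v_{I_+}\ge\epsilon b_{I_+}$ of $\overline{{\cal F}}_{\epsilon}^1$ yields $Ay_{I_+}\ge\epsilon b_{I_+}>0$. Because $A$ is a Z-matrix and $y_{I_+}>0$, the standard semi-positive-vector characterization forces $A$ to be a nonsingular M-matrix. In particular $A^{-1}\ge 0$ and, since $b_{I_+}>0$, a routine no-zero-row argument gives $A^{-1}b_{I_+}>0$ componentwise.

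For step (b), I would evaluate $Sy_{I_0}$ using both block rows of $f'(y)y=v$: from $Dy_{I_0}=v_{I_0}-Cy_{I_+}$ and $A^{-1}By_{I_0}=A^{-1}v_{I_+}-y_{I_+}$ the cross terms cancel, leaving
\[
Sy_{I_0}=v_{I_0}-CA^{-1}v_{I_+}.
\]
The defining inequalities of $\overline{{\cal F}}_{\epsilon,\epsilon'}$ together with $-CA^{-1}\ge 0$ then give
\[
Sy_{I_0}\ge\epsilon' CA^{-1}b_{I_+}-\epsilon CA^{-1}b_{I_+}=(\epsilon'-\epsilon)CA^{-1}b_{I_+}.
\]
With $\epsilon'<\epsilon$ and $CA^{-1}b_{I_+}\le 0$ the right-hand side is nonnegative; the main obstacle is to upgrade this to componentwise strict positivity. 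Here I would invoke Assumption \ref{ass:B}: semi-symmetry of ${\cal A}_i$ allows me to place an $I_+$ index into the second slot of the guaranteed nonzero entry $a_{ii_2\ldots i_m}$, so that for each $i\in I_0$ there exists $j\in I_+$ with $C_{ij}<0$; combined with $A^{-1}b_{I_+}>0$ from step (a) this yields $(CA^{-1}b_{I_+})_i<0$ for every $i\in I_0$, hence $Sy_{I_0}>0$. Noting that $S$ itself is a Z-matrix (off-diagonals of $D$ are only reinforced by subtracting the entrywise nonnegative matrix $CA^{-1}B$), the same semi-positive-vector test shows that $S$ is a nonsingular M-matrix.

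To finish, I would use the block factorization
\[
f'(y)=\left(\begin{array}{cc} I & 0 \\ CA^{-1} & I \end{array}\right)\left(\begin{array}{cc} A & 0 \\ 0 & S \end{array}\right)\left(\begin{array}{cc} I & A^{-1}B \\ 0 & I \end{array}\right),
\]
whose inverse is a product of three entrywise-nonnegative matrices since $A^{-1},S^{-1}\ge 0$ from steps (a),(b) and $-A^{-1}B,-CA^{-1}\ge 0$. Thus $f'(y)^{-1}\ge 0$, and combined with the Z-pattern of $f'(y)$ this proves that $f'(y)$ is a nonsingular M-matrix. The delicate point, as flagged, is the strict-negativity of $(CA^{-1}b_{I_+})_i$, which is exactly where Assumption \ref{ass:B} does essential work — without the guaranteed off-diagonal nonzero entries of ${\cal A}_i$ for $i\in I_0$, the Schur-complement bound could only be certified nonnegative, not strictly positive.
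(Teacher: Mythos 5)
Your proposal follows essentially the same route as the paper: establish that $f'(y)_{I_+I_+}$ is a nonsingular M-matrix from the semipositivity of the $I_+$ block of $f'(y)y$, then bound the Schur complement applied to $y_{I_0}$ below by $-(\epsilon-\epsilon')\,f'(y)_{I_0I_+}f'(y)_{I_+I_+}^{-1}b_{I_+}$ and conclude. The only difference is that you explicitly justify the strict positivity of that bound via Assumption 3.1 (each row of $f'(y)_{I_0I_+}$ has a strictly negative entry, and $f'(y)_{I_+I_+}^{-1}b_{I_+}>0$) and spell out the block-factorization step, both of which the paper asserts without detail; this is a correct and welcome sharpening rather than a different argument.
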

\begin{proof}
By direct computation, we get $f'(y)y={\cal A}\Big ( y^{[\frac 1{m-1}]} \Big )^{m-1}$.
The condition $y\in \overline {\cal F}_{\epsilon} ^1$ yields
\[
0<\epsilon b_{I_+} \le \Big ( {\cal A}\Big ( y^{[\frac 1{m-1}]} \Big )^{m-1} \Big )_{I_+} = f'(y)_{I_+I_+} y_{I_+}+f'(y)_{I_+I_0}y_{I_0}
\le f'(y)_{I_+I_+} y_{I_+}.
\]
Consequently, $f'(y)_{I_+I_+}$ is a nonsingular M-matrix.
We are going to show that the Schur complement
\[
 f'(y)_{I_0I_0}- f'(y)_{I_0I_+} f'(y)_{I_+I_+}^{-1} f'(y)_{I_+I_0}
 \]
is also a nonsingular M-matrix.

Observing $f'(y)_{I_0I_+}\le 0$, we get from the condition $y\in \overline {\cal F}^2_{\epsilon'}$
\begin{eqnarray*}
f'(y)_{I_0I_0}y_{I_0} &\ge & \epsilon' f'(y)_{I_0I_+}f'(y)_{I_+I_+}^{-1}b_{I_+} - f'(y)_{I_0I_+}y_{I_+} \\
    & \ge & \epsilon ' f'(y)_{I_0I_+}f'(y)_{I_+I_+}^{-1}b_{I_+} - f'(y)_{I_0I_+} f'(y)_{I_+I_+}^{-1}\Big (\epsilon b_{I_+} -f'(y)_{I_+I_0} y_{I_0} \Big )  \\
    & = & -(\epsilon-\epsilon' ) f'(y)_{I_0I_+} f'(y)_{I_+I_+}^{-1}b_{I_+} + f'(y)_{I_0I_+} f'(y)_{I_+I_+}^{-1} f'(y)_{I_+I_0}y_0,
\end{eqnarray*}
which implies
\[
\Big ( f'(y)_{I_0I_0}- f'(y)_{I_0I_+} f'(y)_{I_+I_+}^{-1} f'(y)_{I_+I_0}\Big  )y_{I_0}
\ge  -(\epsilon -\epsilon') f'(y)_{I_0I_+} f'(y)_{I_+I_+}^{-1}b_{I_+}>0.
\]
The last condition ensures that $f'(y)$ is a nonsingular M-matrix.
\end{proof}

Similar to Lemma \ref{lem1}, we have the following lemma.
\begin{lemma}
If ${\cal A}=\left(a_{i_{1} \ldots i_{m}}\right)$ is a Z-tensor and $b \geq 0,$ then for any
$\epsilon>0,$ the set $\overline{{\cal F}}_{\epsilon}$ is bounded away from zero. That is, there is a constant $\delta>0$ such that
$$
y \geq \delta \mathbf{e}, \quad \forall y \in \overline{{\cal F}}_{\epsilon,\epsilon'}.
$$
\end{lemma}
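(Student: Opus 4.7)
The plan is to argue by contradiction, paralleling the proof of Lemma \ref{lem1}(i) but splitting according to whether the offending index lies in $I_+$ or in $I_0$. Suppose $\overline{{\cal F}}_{\epsilon,\epsilon'}$ is not bounded away from zero; then there exist $\{y_k\}\subset\overline{{\cal F}}_{\epsilon,\epsilon'}$ and $i\in[n]$ with $(y_k)_i\to 0$, equivalently $(x_k)_i\to 0$ for $x_k=y_k^{[1/(m-1)]}$. I would then dispose of the two cases separately.

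If $i\in I_+$, the argument of Lemma \ref{lem1}(i) transfers verbatim: the Z-tensor property forces every off-diagonal contribution to $({\cal A} x_k^{m-1})_i$ to be nonpositive, so $({\cal A} x_k^{m-1})_i\le a_{ii\cdots i}(x_k)_i^{m-1}$. Combined with the defining inequality $({\cal A} x_k^{m-1})_i\ge \epsilon b_i$ of $\overline{{\cal F}}_\epsilon^1$, this gives $\epsilon b_i\le a_{ii\cdots i}(x_k)_i^{m-1}\to 0$, contradicting $b_i>0$. This step also already produces a \emph{uniform} lower bound $y_j\ge \epsilon b_j/a_{jj\cdots j}$ for every $j\in I_+$.

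If $i\in I_0$, the Z-tensor bound degenerates because $b_i=0$. I would instead reuse the Schur-complement manipulation carried out in the proof of the preceding theorem, which showed that for every $y\in\overline{{\cal F}}_{\epsilon,\epsilon'}$,
$$
\bigl[f'(y)_{I_0 I_0}-f'(y)_{I_0 I_+}f'(y)_{I_+I_+}^{-1}f'(y)_{I_+I_0}\bigr]y_{I_0}\ \ge\ (\epsilon-\epsilon')\bigl(-f'(y)_{I_0 I_+}\bigr)f'(y)_{I_+I_+}^{-1}b_{I_+}.
$$
Under Assumption \ref{ass:B}, every row of $-f'(y)_{I_0 I_+}$ carries at least one strictly positive entry (arising from some $a_{i i_2\cdots i_m}\neq 0$ with $i_2,\dots,i_m\in I_+$), and $f'(y)_{I_+I_+}^{-1}b_{I_+}>0$ because $f'(y)_{I_+I_+}$ is a nonsingular M-matrix. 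Hence the right-hand side is componentwise strictly positive, and inverting the (nonsingular M-matrix) Schur complement yields a strict pointwise positive lower bound for $y_{I_0}$.

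The main obstacle is promoting this pointwise positivity to the \emph{uniform} lower bound the lemma asserts, since $\overline{{\cal F}}_{\epsilon,\epsilon'}$ need not be bounded above. Along the hypothetical subsequence with $(y_k)_i\to 0$, the uniform positivity of $y_{I_+}$ obtained in the first case lets one pass (up to a subsequence) to a limit of the $I_+$-block in a compact subset of $\mathbb{R}^{|I_+|}_{++}$, so that $f'(y_k)_{I_+I_+}^{-1}b_{I_+}$ and the off-diagonal contributions to $({\cal A} x_k^{m-1})_i$ indexed purely by $I_+$ remain bounded away from zero, while every term carrying an explicit factor of $(x_k)_i$ vanishes in the limit. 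Pushing the Schur-complement inequality at index $i$ through this limit then contradicts $(y_k)_i\to 0$. The delicate step is precisely this uniform control of the Jacobian-dependent right-hand side in the large-$y$ regime, and it is where the argument genuinely departs from that of Lemma \ref{lem1}(i); I expect it to be the main technical difficulty.
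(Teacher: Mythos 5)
Your decomposition into the cases $i\in I_+$ and $i\in I_0$, and your treatment of the $I_+$ case, match the paper exactly. But the $I_0$ case is where the lemma lives, and there your argument is incomplete, as you yourself concede in your final sentence. The gap is real and your proposed repair would fail: $\overline{{\cal F}}_{\epsilon,\epsilon'}$ is bounded \emph{below} but not above (the defining inequalities $({\cal A}x^{m-1})_{I_+}\ge\epsilon b_{I_+}$ and the $I_0$ condition survive scaling $x$ upward), so the $I_+$-block of a sequence in the set need not lie in a compact subset of $\mathds{R}^{|I_+|}_{++}$ and need not have a convergent subsequence; you therefore cannot "pass to a limit" to control $f'(y_k)_{I_+I_+}^{-1}b_{I_+}$ or the Schur complement. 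The Schur-complement route also imposes the extra, unaddressed burden of bounding the \emph{inverse} of the Schur complement uniformly over an unbounded set.

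The missing idea --- which is what the paper's (terse) proof actually uses --- is that no upper bound on $y$ is needed, because the decisive quantities are signed sums of monomials in $x=y^{[\frac{1}{m-1}]}$. The paper works directly with the inequality $0\le f'(y)_{I_0I_+}\bigl(y_{I_+}-\epsilon' f'(y)_{I_+I_+}^{-1}b_{I_+}\bigr)+f'(y)_{I_0I_0}\,y_{I_0}$ coming from membership in $\overline{{\cal F}}^2_{\epsilon'}$, rather than with the Schur complement. For $i\in I_0$ one has $\bigl(f'(y)_{I_0I_+}y_{I_+}\bigr)_i=\sum_{j\in I_+}({\cal A}x^{m-2})_{ij}x_j$, a sum of terms $a_{ii_2\ldots i_m}x_{i_2}\cdots x_{i_m}$ with $i_2\in I_+$, every one of which is nonpositive since ${\cal A}$ is a Z-tensor and $i\ne i_2$; Assumption \ref{ass:B} supplies at least one such term with all of $i_2,\ldots,i_m\in I_+$ and $a_{ii_2\ldots i_m}<0$, and that single term is already $\le -|a_{ii_2\ldots i_m}|\,\delta_+^{m-1}$ using only the lower bound $\delta_+$ on $x_{I_+}$ from your first case. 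Discarding the remaining nonpositive terms, and noting $y_{I_+}-\epsilon' f'(y)_{I_+I_+}^{-1}b_{I_+}\ge(1-\epsilon'/\epsilon)\,y_{I_+}>0$ (from $\epsilon f'(y)_{I_+I_+}^{-1}b_{I_+}\le y_{I_+}$ and $f'(y)_{I_0I_+}\le 0$), gives a uniform strictly negative bound on the first summand valid for arbitrarily large $y$: growth of $y$ only makes it more negative. This forces $\bigl(f'(y)_{I_0I_0}y_{I_0}\bigr)_i\ge c>0$, and since $\bigl(f'(y)_{I_0I_0}y_{I_0}\bigr)_i\le a_{i\ldots i}\,(y)_i$ (again by discarding nonpositive off-diagonal monomials), one concludes $(y)_i\ge c/a_{i\ldots i}$ uniformly. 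No limit or compactness argument is required.
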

\begin{proof}
First, following the same arguments as the proof of Lemma \ref{lem1} (i), it is easy to show that the
elements $y_{I_+}$ has a positive lower bound. We only need to prove that $y_{I_0}$ has a positive lower bound too.

It is easy to see by the definition of $\overline {\cal F}_{\epsilon} ^1$ that  each $y\in \overline {\cal F}_{\epsilon} ^1$
satisfies
\[
\epsilon b_{I_+}\le   \Big ({\cal A} \Big (y^{[\frac{1}{m-1}]} \Big )^{(m-1)} \Big )_{I_+} = (f'(y)y)_{I_+} \le f'(y)_{I_+I_+}y_{I_+},
\]
which implies
\[
y_{I_+}\ge f'(y)_{I_+I_+}^{-1}b_{I_+}>\epsilon' f'(y)_{I_+I_+}^{-1} b_{I_+}.
\]

The condition $y\in \overline {\cal F}^2_{\epsilon'}$ implies
\[
0\le f'_{I_0I_+}(y) \Big (y_{I_+} - \epsilon' f'(y)_{I_+I_+}^{-1} b_{I_+}\Big )+ f'_{I_0I_0}(y) y_{I_0}
\]
By the condition of Assumption \ref{ass:B} and the fact that $y_{I_+}$ has positive lower bound, we claim
that  the vector $f'_{I_0I_+}(y) \Big (y_{I_+} - \epsilon' f'(y)_{I_+I_+}^{-1} b_{I_+}\Big )$ is bounded away from zero.
Taking into account that  $f'_{I_0I_0}(y)$ is a Z-matrix and $y_{I_0}>0$, it is easy to see that
$y_{I_0}$ is bounded away from zero too.

The proof is complete.
\end{proof}

In what follows, we propose a Newton method for finding a positive solution to the M-Teq (\ref{eqn:M-teq}) with
$b\geq 0$ as follows

\begin{algorithm}{\bf (Extended Newton Method for (\ref{eqn:M-teq}) with $b\geq 0$) }\label{algo:extension}
\begin{itemize}
\item [] {\bf Initial.} Given  constants $\epsilon, \epsilon',\rho, \eta, \sigma\in (0,1)$ satisfying $\epsilon'<\epsilon$.
Select an initial point $y_0\in\overline{{\cal F}}_{\epsilon,\,\epsilon'}$. Let  $k=0$.
\item [] {\bf Step 1.} Stop if $\|f(y_k)\|\leq \eta$.
\item [] {\bf Step 2.} Solve the system of linear equations
\begin{equation}\label{sub:extension}
f'(y_k)d_k+f(y_k)=0.
\end{equation}
to get $d_k$.
\item [] {\bf Step 3.} Determine a steplength $\alpha _k=\max \{\rho^i:\; i=0,1,\ldots\}$ such that
$y_k+\alpha_kd_k\in \overline{{\cal F}}_{\epsilon,\,\epsilon'}$ and that the inequality
\begin{equation}\label{search:descent1}
\|f(y_k+\alpha _kd_k)\|^2 \le (1-2 \sigma \alpha _k)\|f(y_k)\|^2
\end{equation}
is satisfied.
\item [] {\bf Step 4.} Let $y_{k+1}=y_k+\alpha _kd_k$. Go to Step 1.
\end{itemize}
\end{algorithm}

In what follows, we show that the algorithm above is well-defined.
\begin{proposition}
Let the conditions in Assumption \ref{ass:B} hold. Then Algorithm \ref{algo:extension}
is well defined.
\end{proposition}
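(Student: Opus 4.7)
The plan is to prove this by induction on $k$. Granted $y_k\in\overline{\mathcal{F}}_{\epsilon,\epsilon'}$, the theorem proved immediately above guarantees that $f'(y_k)$ is a nonsingular M-matrix, so the linear system (\ref{sub:extension}) has a unique solution $d_k$ and Step 2 is unambiguous. The substantive content is to show Step 3 terminates, that is, that some $\alpha=\rho^i$ with $i\in\mathbb{N}$ satisfies both the descent inequality (\ref{search:descent1}) and the feasibility $y_k+\alpha d_k\in\overline{\mathcal{F}}_{\epsilon,\epsilon'}$.

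The descent side is handled exactly as in Algorithm \ref{algo:Newton}. Using $f'(y_k)d_k=-f(y_k)$ and a Taylor expansion,
\[
\|f(y_k+\alpha d_k)\|^{2}=\|f(y_k)\|^{2}-2\alpha\|f(y_k)\|^{2}+O(\alpha^{2}),
\]
so for any $\sigma\in(0,1)$ the inequality (\ref{search:descent1}) is satisfied once $\alpha>0$ is small enough. The positivity $y_k+\alpha d_k>0$ follows from $y_k\ge\delta\mathbf{e}$ (the extension of Lemma 3.3) and boundedness of $d_k$. The $\overline{\mathcal{F}}^1_\epsilon$ part is handled by the argument of Lemma \ref{lem2} applied to the $I_+$ block only: Taylor expansion combined with $f'(y_k)d_k=-f(y_k)$ gives
\[
\mathcal{A}\bigl[(y_k+\alpha d_k)^{[\frac{1}{m-1}]}\bigr]^{m-1}=(1-\alpha)\mathcal{A}\bigl(y_k^{[\frac{1}{m-1}]}\bigr)^{m-1}+\alpha b+O(\alpha^{2}),
\]
whose $I_+$ block is bounded below by $\epsilon b_{I_+}+\alpha(1-\epsilon)b_{I_+}+O(\alpha^{2})$ and hence by $\epsilon b_{I_+}$ for small $\alpha$.

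The main obstacle is the $\overline{\mathcal{F}}^{2}_{\epsilon'}$ constraint, because its right-hand side depends on $y$ through the Jacobian. Writing $M(y):=f'(y)_{I_0I_+}f'(y)_{I_+I_+}^{-1}$, the $I_0$-block of the same Taylor identity together with $b_{I_0}=0$ yields that the left-hand side equals $(1-\alpha)\mathcal{A}(y_k^{[\frac{1}{m-1}]})^{m-1}_{I_0}+O(\alpha^{2})$, while by continuous differentiability of $M$ (inherited from $f$ on the compact region $y\ge\delta\mathbf{e}$), the right-hand side equals $\epsilon'M(y_k)b_{I_+}+O(\alpha)$. The plan to close the inequality is to exploit two strict gaps supplied by Assumption \ref{ass:B} and the design parameters: (i) $\epsilon>\epsilon'$, and (ii) $\bigl(M(y_k)b_{I_+}\bigr)_i<0$ for every $i\in I_0$, which comes from the irreducibility part of Assumption \ref{ass:B} (each row of $f'(y)_{I_0I_+}$ has a strictly negative entry) together with $f'(y)_{I_+I_+}^{-1}\ge 0$ and $b_{I_+}>0$. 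Combining this strict negativity with the bound $\mathcal{A}(y_k^{[\frac{1}{m-1}]})^{m-1}_{I_0}\ge\epsilon M(y_k)b_{I_+}$ (obtained, as in the proof of Lemma 3.3, from the $I_+$ feasibility of $y_k$ via $y_{k,I_+}\ge\epsilon f'(y_k)_{I_+I_+}^{-1}b_{I_+}$ and the $I_0$-row of $f'(y_k)y_k=\mathcal{A}(y_k^{[\frac{1}{m-1}]})^{m-1}$), one obtains
\[
\mathcal{A}\bigl[(y_k+\alpha d_k)^{[\frac{1}{m-1}]}\bigr]^{m-1}_{I_0}-\epsilon'M(y_k+\alpha d_k)b_{I_+}\ge\alpha(\epsilon-\epsilon')\bigl(-M(y_k)b_{I_+}\bigr)+O(\alpha^{2})\ge 0
\]
for all sufficiently small $\alpha>0$, where the first $\alpha$-term is a strictly positive vector by (i) and (ii). Thus, for any sufficiently small $\alpha>0$, both the descent and all three feasibility conditions are in force; the backtracking loop therefore finds an admissible $\alpha_k=\rho^{i_k}$ after finitely many trials, closing the induction.
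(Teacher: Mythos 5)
Your overall strategy---expanding $\mathcal{A}\bigl[(y_k+\alpha d_k)^{[\frac{1}{m-1}]}\bigr]^{m-1}=(1-\alpha)\mathcal{A}\bigl(y_k^{[\frac{1}{m-1}]}\bigr)^{m-1}+\alpha b+O(\alpha^2)$ and checking the $I_+$ and $I_0$ blocks separately---is exactly the paper's, and your treatment of Step 2, of the descent test, and of the $\overline{\mathcal{F}}^1_{\epsilon}$ constraint is fine. You are also right to flag the point the paper glosses over: the $\overline{\mathcal{F}}^2_{\epsilon'}$ threshold depends on $y$, whereas the paper's inequality (\ref{steplength-2}) only compares against the threshold frozen at $y_k$ (its concluding display writes $f'(y)$ ambiguously). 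But your resolution of this point does not go through as written. Feasibility of $y_k$ gives only $\mathcal{A}\bigl(y_k^{[\frac{1}{m-1}]}\bigr)^{m-1}_{I_0}\ge\epsilon'M(y_k)b_{I_+}$ in your notation $M(y)=f'(y)_{I_0I_+}f'(y)_{I_+I_+}^{-1}$, so the Taylor identity (with $b_{I_0}=0$) yields a surplus of $\alpha\,\epsilon'\bigl(-M(y_k)b_{I_+}\bigr)+O(\alpha^2)$ over the \emph{old} threshold. The \emph{new} threshold differs from the old by $\epsilon'\bigl[M(y_k+\alpha d_k)-M(y_k)\bigr]b_{I_+}=O(\alpha\|d_k\|)$, which is first order in $\alpha$ with indeterminate sign; it is not an $O(\alpha^2)$ term, and for a fixed iterate $k$ with $\|d_k\|$ not small there is no reason it is dominated by the surplus $\alpha\,\epsilon'\bigl(-M(y_k)b_{I_+}\bigr)$. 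Consequently your displayed lower bound with leading term $\alpha(\epsilon-\epsilon')\bigl(-M(y_k)b_{I_+}\bigr)$ does not follow from the ingredients you list.

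The auxiliary estimate $\mathcal{A}\bigl(y_k^{[\frac{1}{m-1}]}\bigr)^{m-1}_{I_0}\ge\epsilon M(y_k)b_{I_+}$ cannot supply the missing room: since $M(y_k)b_{I_+}<0$ and $\epsilon>\epsilon'$, this is \emph{weaker} than the $\epsilon'$-feasibility you already assumed, so it contributes nothing (and the derivation you sketch only produces the upper bound $f'(y_k)_{I_0I_+}(y_k)_{I_+}\le\epsilon M(y_k)b_{I_+}$, which goes the wrong way for bounding the $I_0$ block from below). To match what the paper actually proves you should either evaluate the $\overline{\mathcal{F}}^2$ threshold at $y_k$, as the paper implicitly does, or add a genuine estimate controlling $M(y_k+\alpha d_k)-M(y_k)$ against the strictly positive vector $-M(y_k)b_{I_+}$; as it stands, the key step for the hardest of the three feasibility conditions is unjustified.
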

\begin{proof} It suffices to
verify that the relation $y_k+\alpha_kd_k\in \overline{{\cal F}}_{\epsilon,\,\epsilon_{k+1}'}$
is satisfied for all $\alpha>0$ sufficiently small. Indeed, we have
\begin{eqnarray*}
\lefteqn {
{\cal A} \Big ( (y_k+\alpha d_k)^{[\frac{1}{m-1}]} \Big )^{m-1}  =  f(y_k+\alpha d_k) +b }\\
    &= & f(y_k)+\alpha f'(y_k)d_k + O( \|\alpha d_k \|^2)+b \\
    &=& (1-\alpha )f(y_k) +b +O( \|\alpha d_k \|^2) \\
    &=& (1-\alpha) {\cal A} \Big ( y_k^{[\frac{1}{m-1}]} \Big )^{m-1} + \alpha b +O( \|\alpha d_k \|^2) .
\end{eqnarray*}
Since $y_k\in \overline {\cal F}^1_\epsilon$ and $b_{I_+}>0$, we get from the last equality
\begin{equation}\label{steplength-1}
\Big ( {\cal A} \Big ( (y_k+\alpha d_k)^{[\frac{1}{m-1}]} \Big )^{m-1} \Big )_{I_+}  
\ge \epsilon b_{I_+}  +\alpha [(1-\epsilon )b_{I_+} +O( \alpha \|d_k \|^2),
\end{equation}
which implies $y_k+\alpha d_k\in \overline {\cal F}^1_{\epsilon}$ for all $\alpha>0$ sufficiently small.

Similarly, we have by the fact $y_k\in \overline {\cal F}^2_{\epsilon'}$ and $b_{I_0}=0$
\begin{equation}\label{steplength-2}
\Big ( {\cal A} \Big ( (y_k+\alpha d_k)^{[\frac{1}{m-1}]} \Big )^{m-1} \Big )_{I_0}
\ge \epsilon' f'(y_k)_{I_0I_+}f'(y_k)_{I_+I_+}^{-1} b_{I_+} + \alpha \Big (- \epsilon' f'(y_k)_{I_0I_+}f'(y)_{I_+I_+}^{-1} b_{I_+} +O( \alpha \|d_k \|^2)\Big ).
\end{equation}
By the condition of Assumption \ref{ass:B}, it is not difficult to see from the last
inequality that we claim that the inequality
\[
\Big ( {\cal A} \Big ( (y_k+\alpha d_k)^{[\frac{1}{m-1}]} \Big )^{m-1} \Big )_{I_0}\ge \epsilon' f'(y)_{I_0I_+}f'(y)_{I_+I_+}^{-1} b_{I_+}
\]
is satisfied for all $\alpha>0$ sufficiently small.
\end{proof}

It is easy to show that Lemma \ref{lem1} holds true for the case $b\ge 0$. As a result, the sequence generated by
Algorithm \ref{algo:extension} is bounded. Consequently, the inequalities (\ref{steplength-1}) and (\ref{steplength-2})
ensure that there is a positive constant $\bar\alpha>0$ such that $x_k+\alpha d_k\in \overline {\cal F}_{\epsilon,\epsilon'}$.
$\forall \alpha \in (0,\bar\alpha]$.

Similar to the proof of Theorem \ref{th:conv-Newton}, we can prove the global convergence of Algorithm  \ref{algo:extension}.

\begin{theorem}\label{th:conv-e}
Let the conditions in Assumption \ref{ass:B} hold. Then the sequence of iterates $\left\{y_{k}\right\}$ generated by
Algorithm \ref{algo:extension} is bounded. Moreover, every accumulation point of the iterates $\{y_k\}$ is a positive solution to the M-tensor equation $f(y)=0 .$
\end{theorem}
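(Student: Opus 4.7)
The proof plan is to follow the template of Theorem \ref{th:conv-Newton} as closely as possible, but we must handle the fact that feasibility is now controlled by two separate inequalities, one for $I_+$ and one for $I_0$, and the failure of the Armijo trial step length could in principle come from violating either the sufficient decrease condition (\ref{search:descent1}) or the feasibility requirement $y_k+\alpha_k d_k \in \overline{{\cal F}}_{\epsilon,\epsilon'}$.

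First I would establish boundedness of $\{y_k\}$. By Step 3 the residual sequence $\{\|f(y_k)\|\}$ is decreasing, so $\{y_k\}\subset \Omega_{\|f(y_0)\|}$, which is bounded by the analog of Lemma \ref{lem1}(ii) (the argument there uses only that $\cal A$ is a strong M-tensor and works verbatim for $b\ge 0$). Combined with the lemma just proved giving a uniform lower bound $y_k\ge \delta\mathbf{e}$, the sequence $\{y_k\}$ lies in a fixed compact subset of $\mathds{R}^n_{++}$. Because $f'(y_k)$ is a nonsingular M-matrix uniformly on this compact set (by the preceding theorem), the sequence $\{d_k\}$ obtained from (\ref{sub:extension}) is bounded as well.

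Next I would pick an accumulation point $\bar y$ of $\{y_k\}$, along a subsequence $K$, and (passing to a further subsequence) assume $d_k\to \bar d$. The uniform lower bound ensures $\bar y\in \mathds{R}^n_{++}$, and continuity of $f$ on the compact set gives $f'(\bar y)\bar d + f(\bar y)=0$. It remains to show $f(\bar y)=0$. Set $\tilde\alpha = \liminf_{k\to\infty,\,k\in K}\alpha_k$. If $\tilde\alpha>0$, then (\ref{search:descent1}) together with monotonicity of $\{\|f(y_k)\|\}$ forces $f(\bar y)=0$. If $\tilde\alpha=0$, then for all large $k\in K$ the trial $\alpha_k' = \rho^{-1}\alpha_k$ must violate either (\ref{search:descent1}) or the membership in $\overline{{\cal F}}_{\epsilon,\epsilon'}$.

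The main obstacle is ruling out the second possibility for all sufficiently large $k\in K$. This is where I would invoke the uniform estimates (\ref{steplength-1}) and (\ref{steplength-2}) from the well-definedness proposition: because $y_k$ lies in the fixed compact subset of $\mathds{R}^n_{++}$, the coefficients $(1-\epsilon)b_{I_+}$ and $-\epsilon' f'(y_k)_{I_0I_+}f'(y_k)_{I_+I_+}^{-1}b_{I_+}$ are uniformly bounded away from $0$ (the latter by Assumption \ref{ass:B}, which guarantees $(f'(y)_{I_0I_+}f'(y)_{I_+I_+}^{-1}b_{I_+})_i<0$ for $i\in I_0$ uniformly), while $\|d_k\|$ is uniformly bounded. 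Hence there exists $\bar\alpha>0$, independent of $k$, such that $y_k+\alpha d_k\in \overline{{\cal F}}_{\epsilon,\epsilon'}$ whenever $\alpha\in(0,\bar\alpha]$. Therefore for $k\in K$ large enough, $\alpha_k'<\bar\alpha$ so the set-membership holds, and the failure must be of (\ref{search:descent1}):
\[
\|f(y_k+\alpha_k' d_k)\|^2-\|f(y_k)\|^2 > -2\sigma\alpha_k'\|f(y_k)\|^2.
\]
Dividing by $\alpha_k'$ and letting $k\to\infty$ in $K$ yields $2f(\bar y)^T f'(\bar y)\bar d \ge -2\sigma\|f(\bar y)\|^2$, and combining with $f'(\bar y)\bar d = -f(\bar y)$ gives $-2\|f(\bar y)\|^2 \ge -2\sigma\|f(\bar y)\|^2$, whence $(1-\sigma)\|f(\bar y)\|^2\le 0$ and so $f(\bar y)=0$. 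Because $\bar y>0$, $x^*=\bar y^{[1/(m-1)]}$ is a positive solution of the M-tensor equation.
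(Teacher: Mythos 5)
Your proposal is correct and follows exactly the route the paper intends: the paper omits an explicit proof of this theorem, stating only that it is ``similar to the proof of Theorem \ref{th:conv-Newton}'' and relying on the preceding remarks that the boundedness lemma extends to $b\ge 0$ and that (\ref{steplength-1})--(\ref{steplength-2}) yield a uniform $\bar\alpha>0$ for feasibility. Your write-up supplies precisely those details, including the one point the paper glosses over (ruling out failure of the set-membership condition, rather than of (\ref{search:descent1}), for the rejected trial step), so it is a faithful and complete version of the intended argument.
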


The remainder of this section is devoted to the proof of the quadratic convergence of Algorithm  \ref{algo:extension}.
It should be pointed out that the unit steplength may not be acceptable due to the existence of zero elements in $b$.
To ensure the quadratic convergence of the method, we need to make a slight modification to Step 3 of the algorithm. Specifically,
we use the following Step 3${}'$ instead of Step 3 in Algorithm  \ref{algo:extension}.

{\bf Step 3${}'$.} If $\alpha_k=1$ satisfies
$y_k+\alpha_kd_k\in \overline{{\cal F}}_{\epsilon,\,\epsilon'}$ and (\ref{search:descent1}),
then we let $\alpha_k=1$. Otherwise,  for given constant $c>0$, we let $\beta_k=1-c\|f(y_k)\|$. If $\beta_k\le 0$, we let $\beta_k=1$.
Determine a steplength $\alpha _k=\max \{\beta _k\rho^i:\; i=0,1,\ldots\}$ such that
$y_k+\alpha_kd_k\in \overline{{\cal F}}_{\epsilon,\,\epsilon'}$ and that the inequality (\ref{search:descent1})
is satisfied.

It is not difficult to see that the global convergence still remains true if Step 3 is replaced by Step 3${}'$.
Moreover, since $\{d_k\}\to 0$, it is easy to prove from (\ref{steplength-1}), (\ref{steplength-2})
and (\ref{search:descent1})
that for all $k$ sufficiently large, the step $\alpha_k=\beta_k=1-c\|f(y_k)\|$ will be accepted.
In this case, the sequence of iterates $\{y_k\}$ satisfies
$y_{k+1}=y_k+\bar d_k$, with $\bar d_k= \beta _kd_k = (1-c \|f(y_k)\|)d_k$
satisfying
\[
f'(y_k)\bar d_k + f(y_k)= f'(y_k)d_k +f(y_k) -c\|f(y_k)\|d_k= -c\|f(y_k)\|d_k.
\]
If $y_{k+1}=y_k+\bar d_k\in \overline{\cal F}_{\epsilon,\epsilon'}$, then
 when $k$ is sufficiently, $y_k$ can be regarded as the sequence generated by
a full step inexact Newton method. Consequently, the quadratic convergence becomes well-known. 

\begin{theorem}
Let the conditions in Assumption \ref{ass:B} hold. Suppose that the sequence of iterates $\left\{y_{k}\right\}$ generated by
Algorithm \ref{algo:extension} converges to a positive solution $y^*$ to the M-tensor equation $f(y)=0 .$  Then the convergence
rate of $\{y_{k} \}$ is quadratic.
\end{theorem}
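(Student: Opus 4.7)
\emph{Proof plan.} The plan is to prove that for all sufficiently large $k$ the acceptance rule of Step~3$'$ selects the damped step $\alpha_k = \beta_k = 1 - c\|f(y_k)\|$, after which the iteration becomes an inexact Newton method whose residual is of order $O(\|f(y_k)\|^2)$; quadratic convergence then follows from the classical convergence theorem for inexact Newton methods.

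First I would collect the regularity available near $y^*$. By the theorem preceding Algorithm~\ref{algo:extension}, $f'(y^*)$ is a nonsingular M-matrix, hence from $f'(y_k) d_k = -f(y_k)$ we obtain $\|d_k\| = O(\|f(y_k)\|) \to 0$. Moreover $y^*$ lies in the interior of $\overline{{\cal F}}_{\epsilon,\epsilon'}$: the defining inequality on $I_+$ has slack $(1-\epsilon)\, b_{I_+} > 0$, and the one on $I_0$ has slack $-\epsilon' f'(y^*)_{I_0 I_+} f'(y^*)_{I_+ I_+}^{-1} b_{I_+}$, which is bounded away from zero thanks to Assumption~\ref{ass:B}.

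Next I would verify both acceptance conditions of Step~3$'$ for $\alpha_k = \beta_k$. Feasibility of $y_k + \beta_k d_k$ in $\overline{{\cal F}}_{\epsilon,\epsilon'}$ is extracted from the inequalities (\ref{steplength-1}) and (\ref{steplength-2}): on each block the first-order slack in $\beta_k$ has coefficient uniformly bounded below (by the constants identified above) and dominates the quadratic remainder $O(\beta_k^2 \|d_k\|^2)$ once $\|d_k\|$ is small. The descent test (\ref{search:descent1}) is checked via the Taylor expansion $f(y_k + \beta_k d_k) = (1-\beta_k) f(y_k) + O(\|d_k\|^2)$, which gives $\|f(y_k + \beta_k d_k)\|^2 = O(\|f(y_k)\|^4)$, whereas $(1-2\sigma \beta_k)\|f(y_k)\|^2 \geq (1-2\sigma)\|f(y_k)\|^2 > 0$ for $\sigma \in (0,1/2)$.

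With $\bar d_k := \beta_k d_k$ accepted, the Newton equation gives
\[
\|f'(y_k) \bar d_k + f(y_k)\| = (1-\beta_k)\|f(y_k)\| = c\|f(y_k)\|^2,
\]
so $y_{k+1}=y_k+\bar d_k$ is an inexact Newton step with forcing sequence $\eta_k = c\|f(y_k)\|$ tending to zero linearly in $\|f(y_k)\|$; invoking the classical inexact-Newton quadratic convergence theorem finishes the argument. The hard part is the feasibility check on the $I_0$ block: because $b_{I_0} = 0$ there is no first-order slack coming from the constant term, and one must rely on the threshold $\epsilon' f'(y)_{I_0 I_+} f'(y)_{I_+ I_+}^{-1} b_{I_+}$ being strictly negative (exactly what Assumption~\ref{ass:B} is designed to guarantee). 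This is also the reason the damping $\beta_k$ rather than the unit step appears in Step~3$'$.
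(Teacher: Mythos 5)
Your proposal is correct and follows essentially the same route as the paper: show $\|d_k\| = O(\|f(y_k)\|)$, verify that the damped step $\beta_k = 1 - c\|f(y_k)\|$ keeps the iterate in $\overline{{\cal F}}_{\epsilon,\epsilon'}$ via the expansions (\ref{steplength-1})--(\ref{steplength-2}) (with the $I_0$ block handled by the strict negativity of $f'(y_k)_{I_0I_+}f'(y_k)_{I_+I_+}^{-1}b_{I_+}$ from Assumption \ref{ass:B}), and then invoke inexact-Newton theory with residual $c\|f(y_k)\|\,\|d_k\| = O(\|f(y_k)\|^2)$. You are in fact slightly more complete than the paper in explicitly checking the descent test (\ref{search:descent1}); no substantive differences otherwise.
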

\begin{proof}
We only need to verify
\begin{equation}\label{unit-2}
y_{k+1}=y_k+\bar d_k= y_k+ \Big (1-c \|f(y_k)\|\Big )d_k\in \overline{\cal F}_{\epsilon,\epsilon'}.
\end{equation}

It is not difficult to show from (\ref{sub:extension}) that
\[
\|d_k\| = O(\|f(y_k)\|)=O(\|f(y_k)\|)=O(\|x-x^*\|).
\]
Similar to the proof of (\ref{steplength-1}), we can derive
\[
\Big ( {\cal A} \Big ( (y_k+\beta _k d_k)^{[\frac{1}{m-1}]} \Big )^{m-1} \Big )_{I_+}
\ge \epsilon b_{I_+}  +\beta_k [(1-\epsilon )b_{I_+} +O( \beta_k \|d_k \|^2).
\]
Since $\{\beta_k\}\to 1$ and $\{d_k\}\to 0$, the last inequality implies $y_k+\beta_kd_k\in \overline{\cal F}^1_{\epsilon}$.

We also can obtain
\[
\Big ( {\cal A} \Big ( (y_k+\beta_k d_k)^{[\frac{1}{m-1}]} \Big )^{m-1} \Big )_{I_0}
\ge \epsilon' f'(y_k)_{I_0I_+}f'(y_k)_{I_+I_+}^{-1} b_{I_+} + \beta_k  \Big (- \epsilon' f'(y_k)_{I_0I_+}f'(y_k)_{I_+I_+}^{-1} b_{I_+} +O( \beta_k \|d_k \|^2)\Big ).
\]
By the condition of Assumption \ref{ass:B}, it is clear that $f'(y_k)_{I_0I_+}f'(y_k)_{I_+I_+}^{-1} b_{I_+} <0$. Consequently,
the last inequality implies $y_k+\beta_kd_k\in \overline{\cal F}^2_{\epsilon}$.
The proof is complete.
\end{proof}

\section{Numerical Results}
\setcounter{equation}{0}
In this section, we do numerical experiments to test the effectiveness of the proposed methods.
We implemented our methods in Matlab R2019a and ran the codes on a computer with Intedl(R) Core(TM) i7-10510U CPU @ 1.80GHz 2.30 GHz and 16.0 GB RAM.
We used a tensor toolbox \cite{Bader-Kolda-15} to proceed some tensor computation.

The test problems are from \cite{Ding-Wei-16,Li-Guan-Xu-20,Li-Xie-Xu-17,Xie-Jin-Wei-2017}.

{\bf Problem 1.} We solve tensor equation (\ref{eqn:M-teq}) where ${\cal A}$ is a symmetric strong M-tensor of order $m$ $(m=3,4,5)$ in the form ${\cal A}=s{\cal I}-{\cal B}$, where tensor ${\cal B}$ is symmetric tensor
whose entries are uniformly distributed in $(0,1)$, and
\[
s=(1+0.01)\cdot\max_{i=1,2,\ldots,n}({\cal B}{\bf e}^{m-1})_i,
\]
where ${\bf e}=(1,1,\ldots,1)^T$.

{\bf Problem 2.} We solve tensor equation (\ref{eqn:M-teq}) where $\cal A$ is a symmetric strong M-tensor of order $m$ $(m=3,4,5)$ in the form ${\cal A}=s{\cal I}-{\cal B}$, and tensor ${\cal B}$ is a nonnegative tensor with
\[
b_{i_1i_2\ldots i_m}=|\mathrm{sin}(i_1+i_2+\ldots +i_m)|,
\]
and $s=n^{m-1}$.

{\bf Problem 3.} Consider the ordinary differential equation
$$
\frac{d^2x(t)}{dt^2}=-\frac{GM}{x(t)^2},\quad t\in(0, 1),
$$
with Dirichlet's boundary conditions
$$
x(0)=c_0, \quad x(1)=c_1,
$$
where $G\approx 6. 67\times 10^{-11}Nm^2/kg^2$ and $M\approx 5. 98\times 10^{24}$ is the gravitational constant and the mass of the earth.

Discretize the above equation, we have
\[\left\{
\begin{array}{l}
	x^3_1=c^3_0, \\
	2x^3_i-x^2_ix_{i-1}-x^2_ix_{i+1}=\frac{GM}{(n-1)^2}, \quad i=2, 3, \cdots, n-1, \\
	x^3_n=c^3_1.
\end{array}
\right.
\]
It is a tensor equation, i.e.,
$${\cal A}x^3=b, $$
where ${\cal A}$ is a 4-th order M tensor whose entries are
\[
\left\{
\begin{array}{l}
	a_{1111}=a_{nnnn}=1, \\
	a_{iiii}=2, \quad i=2, 3, \cdots,  n-1, \\
	a_{i(i-1)ii}=a_{ii(i-1)i}=a_{iii(i-1)}=-1/3, \quad i=2, 3, \cdots,  n-1, \\
	a_{i(i+1)ii}=a_{ii(i+1)i}=a_{iii(i+1)}=-1/3, \quad i=2, 3, \cdots,  n-1, \\
\end{array}
\right.
\]
and b is a positive vector with
\[
\left\{\begin{array}{l}
	b_1=c^3_0, \\
	b_i=\frac{GM}{(n-1)^2}, \quad i=2, 3, \cdots,  n-1, \\
	b_n=c^3_1.
\end{array}\right.
\]

{\bf Problem 4.} We solve tensor equation (\ref{eqn:M-teq}) where $\cal A$ is a non-symmetric strong M-tensor of order $m$ $(m=3,4,5)$ in the form ${\cal A}=s{\cal I}-{\cal B}$, and tensor ${\cal B}$ is nonnegative
tensor whose entries are uniformly distributed in $(0,1)$.
The parameter $s$ is set to
\[
s=(1+0.01)\cdot\max_{i=1,2,\ldots,n}({\cal B}{\bf e}^{m-1})_i.
\]

{\bf Problem 5.} We solve tensor equation (\ref{eqn:M-teq}) where $\cal A$ is a lower triangle strong M-tensor of order $m$ $(m=3,4,5)$ in the form ${\cal A}=s{\cal I}-{\cal B}$, and tensor ${\cal B}$ is a strictly lower triangular nonnegative
tensor whose entries are uniformly distributed in $(0,1)$.
The parameter $s$ is set to
\[
s=(1-0.5)\cdot\max_{i=1,2,\ldots,n}({\cal B}{\bf e}^{m-1})_i.
\]

For Problem 1, 2 4 and 5, similar to \cite{Han-17,He-Ling-Qi-Zhou-18}, we solved the tensor equation
\[
\hat{F}(x)=\hat{\cal {A}}x^{m-1}-\hat{b}=0
\]
instead of the tensor equation (\ref{eqn:M-teq}),
where $\hat{\cal{A}}:=\cal{A}/\omega$ and $\hat{b}:=b/\omega$ with $\omega$ is the largest value among
the absolute values of components of $\cal{A}$ and $b$. The stopping criterion  is set to
\[
\|\hat{F}(x_k)\|\leq 10^{-10}.
\]
And for Problem 3, the stopping criterion  is set to
$$
\frac{\|{\cal A}x^{m-1}-b\| }{\|b\|}\leq 10^{-10}.
$$
We also stop the tested algorithms if the number of iteration reaches to 300,  which means that the method is failure for the problem.

\begin{remark}\label{rm:initial}
	Since ${\cal A}$ is a strong M-tensor, there exists a positive vector $u$ such that ${\cal A}u^{m-1}>0$.
	This vector $u$ can be obtained in a certain iteration of solving ${\cal A}x^{m-1}= {\bf e}$ by the existing methods proposed in \cite{Ding-Wei-16,Han-17, He-Ling-Qi-Zhou-18}.
	Then we can get an initial point of Algorithm 2.4 or Algorithm 3.4 by letting $x_{0} = t {u}$ and $y_{0} = x_{0}^{[m-1]}$ with sufficient large constant $t$.
	Particularly, if $\cal A$ is a diagonally dominant M-tensor we can simply let $u = {\bf e}$.
\end{remark}

Note that strong M-tensor constructed in Problems 1, 2 and 4 are diagonally dorminant M-tensor.
Whereas Problem 3 and 5 both are non-diagonally dorminant M-tensor.

We first test the performance of Algorithm 2.4 (Newton's Method denoted by 'NM').
In order to test the effectiveness of the proposed method, we compare the Newton method with Inexact Newton Method (denoted by 'INM') proposed in \cite{Li-Guan-Xu-20}.
We take the parameter of NM be $\epsilon=0.1, \sigma=0.1$ and $\rho=0.5$.
And let parameters of INM be $\sigma=0.1, \rho=0.4$.
We set the initial point for INM as in \cite{Li-Guan-Xu-20}, i.e., $y_{0} = t{\bf e}>0$ such that $f(y_{0}) \le b$, where $t$ is a sufficient small positive constant.
We use INM to find an initial point of NM for Problem 3 and 5, i.e., in the iterative of INM, if ${\cal A}x_{k}^{m-1}>0$, let the vector $u=x_{k}$ in Remark \ref{rm:initial}.

For the stability of numerical results, we test the problems of different sizes.
For each pair $(m, n)$, we randomly generate 50 tensors $\cal A$ and $b\in (0, 1)$.
The results are listed in Tables \ref{tb:NM-INM-p1} to \ref{tb:NM-INM-p5}, thereinto 'Pro' represent the test problem;
'Iter' represents the average number of iterations;
'Time-Int' denotes the average time to find an initial point of NM;
'Time' denotes the computing time (in seconds) including initial time to find an approximate solution.
And the ratio signs are denoted bellow.
\begin{table}[h!]
	\centering
	\begin{tabular}{cccc}
		\toprule
		R-Int & RI & RT & RT1\\
		\hline
		\specialrule{0em}{1pt}{1pt}
		$\frac{\mbox{Time-Int}}{\mbox{Time of NM}}$ & $\frac{\mbox{Iter of NM}}{\mbox{Iter of INM}}$
		& $\frac{\mbox{Time of NM}}{\mbox{Time of INM}}$ & $\frac{\mbox{Time of NM minus Time-Int} }{\mbox{Time of INM}}$\\
		\specialrule{0em}{1pt}{1pt}
		\bottomrule
	\end{tabular}
\end{table}

\renewcommand\tabcolsep{5.0pt}
{
	\begin{table}[h!]
		\centering
		\begin{tabular}{c|cc|cc|cc}\toprule
			\multirow{2}{*}{}
			& \multicolumn{2}{c|}{NM} & \multicolumn{2}{c|}{INM} & \multicolumn{2}{c}{Rate} \\
			$(m,n)$ & Iter& Time & Iter & Time & RI & RT\\
			\hline
			(3,200)  &2 & 0.01163   &11 & 0.03634  & $18.1\%$ & $32.0\%$\\
			(3,401)  &2 & 0.08120   &12 & 0.39976  & $16.9\%$ & $20.3\%$\\
			(3,650)  &2 & 0.39719   &13 & 1.79962  & $15.8\%$ & $22.1\%$\\
			\hline
			(4, 40)  &2 & 0.00366   &8.4 & 0.00727  & $23.8\%$ & $50.4\%$\\
			(4, 71)  &2 & 0.02786   &9.2 & 0.10398  & $21.7\%$ & $26.8\%$\\
			(4,100)  &2 & 0.11595   &9.6 & 0.45303  & $20.9\%$ & $25.6\%$\\
			(4,130)  &2 & 0.38220   &10 & 1.34639  & $19.6\%$ & $28.4\%$\\
			\hline
			(5, 30)  &2 & 0.02681   &7.6 & 0.08171  & $26.4\%$ & $32.8\%$\\
			(5, 48)  &2 & 0.37926   &8.3 & 1.27908  & $24.0\%$ & $29.7\%$\\
			\bottomrule
		\end{tabular}
		\caption{\small  Comparison on Problem 1.}\label{tb:NM-INM-p1}
	\end{table}
}
{
	\begin{table}[h!]
		\centering
		\begin{tabular}{c|cc|cc|cc}\toprule
			\multirow{2}{*}{}
			& \multicolumn{2}{c|}{NM} & \multicolumn{2}{c|}{INM} & \multicolumn{2}{c}{Rate} \\
			$(m,n)$ & Iter& Time & Iter & Time & RI & RT\\
			\hline
			(3,200)  &3 & 0.01214   &11 & 0.03416  & $28.2\%$ & $35.5\%$\\
			(3,401)  &3 & 0.10887   &12 & 0.40037  & $25.3\%$ & $27.2\%$\\
			(3,650)  &3 & 0.48992   &12 & 1.94355  & $24.9\%$ & $25.2\%$\\
			\hline
			(4, 40)  &3 & 0.00353   &9.4 & 0.00726  & $31.8\%$ & $48.6\%$\\
			(4, 71)  &3 & 0.03599   &9.3 & 0.11561  & $32.2\%$ & $31.1\%$\\
			(4,100)  &2.7 & 0.14569   &9.1 & 0.52564  & $29.4\%$ & $27.7\%$\\
			(4,130)  &2 & 0.35740   &9.7 & 1.59151  & $20.9\%$ & $22.5\%$\\
			\hline
			(5, 30)  &2.4 & 0.02937   &8.4 & 0.09164  & $29.0\%$ & $32.1\%$\\
			(5, 48)  &2 & 0.38996   &8 & 1.61044  & $24.9\%$ & $24.2\%$\\
			\bottomrule
		\end{tabular}
		\caption{\small  Comparison on Problem 2.}\label{tb:NM-INM-p2}
	\end{table}
}
{
	\begin{table}[h!]
		\centering
		\begin{tabular}{c|cccc|cc|ccc}\toprule
			\multirow{2}{*}{}
			& \multicolumn{4}{c|}{NM} & \multicolumn{2}{c|}{INM} & \multicolumn{3}{c}{Rate} \\
			$(m,n)$ & Iter& Time &Time-Int & R-Int & Iter & Time & RI & RT & RT1\\
			\hline
			(4, 40)  &1 & 0.00263 & 0.00166  & $63.1\%$     &5 & 0.00495      & $20.0\%$ & $53.1\%$ & $19.6\%$ \\
			(4, 71)  &1 & 0.03952 & 0.02627  & $66.5\%$     &5 & 0.07993      & $20.0\%$ & $49.4\%$ & $16.6\%$ \\
			(4,100)  &1 & 0.16575 & 0.11177  & $67.4\%$     &5 & 0.33503      & $20.0\%$ & $49.5\%$ & $16.1\%$ \\
			(4,130)  &1 & 0.45219 & 0.29926  & $66.2\%$     &5 & 0.91583      & $20.0\%$ & $49.4\%$ & $16.7\%$ \\
			\bottomrule
		\end{tabular}
		\caption{\small  Comparison on Problem 3.}\label{tb:NM-INM-p3}
	\end{table}
}
{
	\begin{table}[h!]
		\centering
		\begin{tabular}{c|cc|cc|cc}\toprule
			\multirow{2}{*}{}
			& \multicolumn{2}{c|}{NM} & \multicolumn{2}{c|}{INM} & \multicolumn{2}{c}{Rate} \\
			$(m,n)$ & Iter& Time & Iter & Time & RI & RT\\
			\hline
			(3,200)  &2 & 0.00886   &11 & 0.03518  & $17.9\%$ & $25.2\%$\\
			(3,401)  &2 & 0.08087   &12 & 0.39426  & $17.2\%$ & $20.5\%$\\
			(3,650)  &2 & 0.36240   &12 & 1.97655  & $16.4\%$ & $18.3\%$\\
			\hline
			(4, 40)  &2 & 0.00272   &8.7 & 0.00662  & $22.9\%$ & $41.1\%$\\
			(4, 71)  &2 & 0.02710   &9.4 & 0.11227  & $21.3\%$ & $24.1\%$\\
			(4,100)  &2 & 0.11558   &9.4 & 0.54760  & $21.4\%$ & $21.1\%$\\
			(4,130)  &2 & 0.35507   &9.9 & 1.57429  & $20.3\%$ & $22.6\%$\\
			\hline
			(5, 30)  &2 & 0.02590   &7.7 & 0.08085  & $25.9\%$ & $32.0\%$\\
			(5, 48)  &2 & 0.38484   &8.4 & 1.68508  & $23.7\%$ & $22.8\%$\\
			\bottomrule
		\end{tabular}
		\caption{\small  Comparison on Problem 4.}\label{tb:NM-INM-p4}
	\end{table}
}
{
	\begin{table}[h!]
		\centering
		\begin{tabular}{c|cccc|cc|ccc}\toprule
			\multirow{2}{*}{}
			& \multicolumn{4}{c|}{NM} & \multicolumn{2}{c|}{INM} & \multicolumn{3}{c}{Rate} \\
			$(m,n)$ & Iter& Time &Time-Int & R-Int & Iter & Time & RI & RT & RT1\\
			\hline
			(3,200)  &2.9 & 0.02955 & 0.02151  & $72.8\%$     &12 & 0.03493      & $24.7\%$ & $84.6\%$ & $23.0\%$ \\
			(3,401)  &2.9 & 0.35773 & 0.25802  & $72.1\%$     &12 & 0.44801      & $24.2\%$ & $79.8\%$ & $22.3\%$ \\
			(3,650)  &2.9 & 1.83363 & 1.38006  & $75.3\%$     &13 & 2.21081      & $22.5\%$ & $82.9\%$ & $20.5\%$ \\
			\hline
			(4, 40)  &2.9 & 0.00673 & 0.00474  & $70.4\%$     &9.1 & 0.00640      & $32.2\%$ & $105.1\%$ & $31.1\%$ \\
			(4, 71)  &2.8 & 0.09155 & 0.06100  & $66.6\%$     &9.6 & 0.14487      & $29.1\%$ & $63.2\%$ & $21.1\%$ \\
			(4,100)  &2.9 & 0.48084 & 0.31653  & $65.8\%$     &10 & 0.63916      & $28.3\%$ & $75.2\%$ & $25.7\%$ \\
			(4,130)  &2.9 & 1.53593 & 1.08875  & $70.9\%$     &11 & 1.90284      & $25.9\%$ & $80.7\%$ & $23.5\%$ \\
			\hline
			(5, 30)  &2.8 & 0.07459 & 0.04873  & $65.3\%$     &8.7 & 0.10647      & $32.3\%$ & $70.1\%$ & $24.3\%$ \\
			(5, 48)  &2.8 & 1.42363 & 0.92151  & $64.7\%$     &9.1 & 1.87335      & $30.3\%$ & $76.0\%$ & $26.8\%$ \\
			\bottomrule
		\end{tabular}
		\caption{\small  Comparison on Problem 5.}\label{tb:NM-INM-p5}
	\end{table}
}

It can be seen from Table 1 that the proposed NM has an advantage over the INM in \cite{Li-Guan-Xu-20} both in iteration and CPU time.
Paticularly, for Problem 1, 2 and 4, the coefficient tensor are diagonally dorminant,
and thus NM is easy to get an initial point, while for Problem 3 and 5,
although the number of iterations in NM was significantly less than in INM, the reduction in the required CPU time was not significant since it take much CPU time to find an initial point.

We then test the performance of Algorithm 3.4 (the Extended Newton Method denoted by 'ENM') by comparing with the Regularized Newton Method (denoted by 'RNM') proposed in \cite{Li-Guan-Xu-20}.
The parameter of ENM are set to $\epsilon=0.1, \epsilon^\prime=0.05,  \sigma=0.1$ and $\rho=0.5$.
And the parameters in RNM are the same as in \cite{Li-Guan-Xu-20}, i.e., $\sigma=0.1, \rho=0.8, \gamma=0.9$ and $\bar{t} = 0.01$.

For Problem 1, 2, 4 and 5, we generated $b\in R^{n}_{++}$ randomly and then let some but not all elements of $b$ be $0$ randomly, but we let $b_1 \neq 0$ in Problem 5.
Then it is easy to see that the strong M-tensor $\cal A$ and $b$ constructed in our tested problem satisfy the conditions in Assumption 3.1.
For Problem 5, we also use INM to find an initial point of ENM.
The parameter and starting point of INM are the same as before.
And the initial point of RNM is the same as INM.
The results are sumerized in Table \ref{tb:ENM-RNM-p1} to \ref{tb:ENM-RNM-p5}.

It can be seen from the data that the proposed ENM is not only effective for solving the M-Teq with $b\ge 0$, but also more efficient than RNM to a certain extent.
{
	\begin{table}[h!]
		\centering
		\begin{tabular}{c|cc|cc|cc}\toprule
			\multirow{2}{*}{}
			& \multicolumn{2}{c|}{ENM} & \multicolumn{2}{c|}{RNM} & \multicolumn{2}{c}{Rate} \\
			$(m,n)$ & Iter& Time & Iter & Time & RI & RT\\
			\hline
			(3,200)  &2.4 & 0.01286   &7.4 & 0.03194  & $32.3\%$ & $40.3\%$\\
			(3,350)  &2.3 & 0.05659   &7.2 & 0.18505  & $32.0\%$ & $30.6\%$\\
			(3,500)  &2.2 & 0.16599   &7.7 & 0.60120  & $28.1\%$ & $27.6\%$\\
			(3,650)  &2.2 & 0.43180   &7.8 & 1.38419  & $27.8\%$ & $31.2\%$\\
			\hline
			(4, 40)  &2.3 & 0.00467   &7.4 & 0.00732  & $31.1\%$ & $63.8\%$\\
			(4, 90)  &2.1 & 0.07785   &8 & 0.27286  & $26.4\%$ & $28.5\%$\\
			(4,130)  &2 & 0.38707   &8.1 & 1.20770  & $25.2\%$ & $32.1\%$\\
			\hline
			(5, 30)  &2.1 & 0.02927   &7.9 & 0.09237  & $26.2\%$ & $31.7\%$\\
			(5, 48)  &2 & 0.38698   &8.1 & 1.42453  & $24.6\%$ & $27.2\%$\\
			\bottomrule
		\end{tabular}
		\caption{\small  Comparison on Problem 1.}\label{tb:ENM-RNM-p1}
	\end{table}
}
{
	\begin{table}[h!]
		\centering
		\begin{tabular}{c|cc|cc|cc}\toprule
			\multirow{2}{*}{}
			& \multicolumn{2}{c|}{ENM} & \multicolumn{2}{c|}{RNM} & \multicolumn{2}{c}{Rate} \\
			$(m,n)$ & Iter& Time & Iter & Time & RI & RT\\
			\hline
			(3,200)  &3.5 & 0.01525   &9.6 & 0.04179  & $36.7\%$ & $36.5\%$\\
			(3,350)  &3.2 & 0.07255   &10 & 0.23803  & $32.4\%$ & $30.5\%$\\
			(3,500)  &3.2 & 0.22577   &10 & 0.75133  & $32.3\%$ & $30.0\%$\\
			(3,650)  &3.3 & 0.53518   &10 & 1.82259  & $32.6\%$ & $29.4\%$\\
			\hline
			(4, 40)  &3.4 & 0.00394   &9.6 & 0.00838  & $35.3\%$ & $47.0\%$\\
			(4, 90)  &3.3 & 0.10645   &10 & 0.40368  & $31.7\%$ & $26.4\%$\\
			(4,130)  &3.2 & 0.48901   &10 & 1.70984  & $30.9\%$ & $28.6\%$\\
			\hline
			(5, 30)  &3.3 & 0.03658   &10 & 0.12137  & $32.7\%$ & $30.1\%$\\
			(5, 48)  &3 & 0.52140   &11 & 2.20545  & $28.1\%$ & $23.6\%$\\
			\bottomrule
		\end{tabular}
		\caption{\small  Comparison on Problem 2.}\label{tb:ENM-RNM-p2}
	\end{table}
}
{
	\begin{table}[h!]
		\centering
		\begin{tabular}{c|cc|cc|cc}\toprule
			\multirow{2}{*}{}
			& \multicolumn{2}{c|}{ENM} & \multicolumn{2}{c|}{RNM} & \multicolumn{2}{c}{Rate} \\
			$(m,n)$ & Iter& Time & Iter & Time & RI & RT\\
			\hline
			(3,200)  &2.6 & 0.01099   &7.5 & 0.03303  & $34.5\%$ & $33.3\%$\\
			(3,350)  &2.2 & 0.05440   &7.6 & 0.19408  & $29.1\%$ & $28.0\%$\\
			(3,500)  &2.2 & 0.16818   &8 & 0.63763  & $27.4\%$ & $26.4\%$\\
			(3,650)  &2.1 & 0.40086   &8.1 & 1.64549  & $25.8\%$ & $24.4\%$\\
			\hline
			(4, 40)  &2.3 & 0.00477   &7.4 & 0.01114  & $30.6\%$ & $42.8\%$\\
			(4, 90)  &2 & 0.09399   &7.7 & 0.28420  & $26.6\%$ & $33.1\%$\\
			(4,130)  &2 & 0.35984   &8.1 & 1.39153  & $24.9\%$ & $25.9\%$\\
			\hline
			(5, 30)  &2 & 0.02607   &7.8 & 0.09134  & $26.0\%$ & $28.5\%$\\
			(5, 48)  &2 & 0.37042   &8.4 & 1.82102  & $24.1\%$ & $20.3\%$\\
			\bottomrule
		\end{tabular}
		\caption{\small  Comparison on Problem 4.}\label{tb:ENM-RNM-p4}
	\end{table}
}
{
	\begin{table}[h!]
		\centering
		\begin{tabular}{c|cccc|cc|ccc}\toprule
			\multirow{2}{*}{}
			& \multicolumn{4}{c|}{ENM} & \multicolumn{2}{c|}{RNM} & \multicolumn{3}{c}{Rate} \\
			$(m,n)$ & Iter& Time &Time-Int & R-Int & Iter & Time & RI & RT &RT1\\
			\hline
			(3,200)  &4.3 & 0.02343 & 0.01179  & $50.3\%$     &10 & 0.03949      & $42.8\%$ & $59.3\%$ & $29.5\%$ \\
			(3,350)  &4.2 & 0.14555 & 0.06743  & $46.3\%$     &10 & 0.24752      & $40.5\%$ & $58.8\%$ & $31.6\%$ \\
			(3,500)  &4.3 & 0.48284 & 0.21440  & $44.4\%$     &11 & 0.80104      & $39.7\%$ & $60.3\%$ & $33.5\%$ \\
			(3,650)  &4.1 & 1.10155 & 0.49533  & $45.0\%$     &11 & 1.87385      & $38.3\%$ & $58.8\%$ & $32.4\%$ \\
			\hline
			(4, 40)  &4.4 & 0.00624 & 0.00328  & $52.5\%$     &8.3 & 0.00636      & $52.5\%$ & $98.2\%$ & $46.6\%$ \\
			(4, 90)  &4.6 & 0.25343 & 0.10136  & $40.0\%$     &9.2 & 0.39225      & $49.5\%$ & $64.6\%$ & $38.8\%$ \\
			(4,130)  &4.4 & 1.13300 & 0.48959  & $43.2\%$     &9.5 & 1.64542      & $46.6\%$ & $68.9\%$ & $39.1\%$ \\
			\hline
			(5, 30)  &4.4 & 0.07540 & 0.03641  & $48.3\%$     &8 & 0.10592      & $54.4\%$ & $71.2\%$ & $36.8\%$ \\
			(5, 48)  &4.5 & 1.40943 & 0.60182  & $42.7\%$     &8.5 & 1.86548      & $53.2\%$ & $75.6\%$ & $43.3\%$ \\
			\bottomrule
		\end{tabular}
		\caption{\small  Comparison on Problem 5.}\label{tb:ENM-RNM-p5}
	\end{table}
}


\begin{thebibliography}{99}
\bibitem{Bader-Kolda-15}
B.W. Bader, T.G. Kolda and others, MATLAB Tensor Toolbox Version 2.6, 2015.

\bibitem {Bai-He-Ling-Zhou-18}
X. Bai, H. He, C. Ling and G. Zhou, An efficient nonnegativity preserving algorithm for multilinear systems with nonsingular M-tensors, arXiv:1811.09917v1, 2018.

\bibitem {Berman-Plemmons-94}
A. Berman, R. Plemmons, Nonnegative Matrices in the Mathematical Sciences. SIAM, Philadephia 1994.

\bibitem {4}
M. Brazell, N, Li, C. Navasca and  C. Tamon, Solving multilinear systems via tensor inversion,
SIAM Journal on Matrix Analysis \& Applications, 34 (2013) 542-570.

\bibitem{Chang-Pearson-Zhang-11}
K.C. Chang, K. Pearson and T. Zhang, Primitivity, the convergence of the NQZ method, and the largest eigenvalue for nonnegative tensors. SIAM. J. Matrix Anal. Appl. 32 (2011) 806-819.

\bibitem {Ding-Qi-Wei-13}
W. Ding, L. Qi and Y. Wei, M-tensors and nonsingular M-tensors,
Linear Algebra \& Its Applications, 10 (2013) 3264-3278.

\bibitem{Ding-Wei-16}
W. Ding and Y. Wei, Solving multi-linear systems with M-tensors, Journal of Scientific Computing,
68 (2016) 1-27.


\bibitem {8}
F. Facchinei and C. Kanzow,  Beyond mootonicity in regularization methods for nonlinear
complementarity problems, SIAM Journal on Control and Optimimization, 37 (1999) 1150-1161.


\bibitem {Fan-Zhang-Chu-Wei-17}
H.Y. Fan, L. Zhang, E.K. Chu and Y. Wei, Numerical solution to a linear equation with tensor product structure,
Numerical Linear Algebra with Applications, 24 (2017) 1-22.

\bibitem{Gowda-Luo-Qi-Xiu-15}
M.S. Gowda, Z. Luo, L. Qi and N. Xiu, Z-tensor and complementarity problems, arXiv:1510.07933, 2015.

\bibitem {Han-17}
L. Han, A homotopy method for solving multilinear systems with M-tensors,
Applied Mathematics Letters, 69 (2017) 49-54.

\bibitem{He-Ling-Qi-Zhou-18}
H. He, C. Ling, L. Qi and G. Zhou, A globally and quadratically convergent algorithm for
solving multilinear systems with M-tensors, Journal of Scientific Computing, 76 (2018) 1718-1741.

\bibitem {Kressner-Tobler-10}
D. Kressner and C. Tobler, Krylov subspace methods for linear systems with tensor product structure,
SIAM Journal on Matrix Analysis \& Applications, 31 (2010) 1688-1714.

\bibitem{Li-Guan-Xu-20}
D.H. Li, H.B. Guan, J.X. Feng, Inexact Newton method for M-Tensor equations. manuscript. arXiv:2007.13324, 2020.
\bibitem {Li-Xie-Xu-17}
D.H. Li, S. Xie and H.R. Xu, Splitting methods for tensor equations,
Numerical Linear Algebra with Applications, https://doi.org/10.1002/nla.2102, 2017.


\bibitem {Li-Ng-15}
X. Li and M.K. Ng, Solving sparse non-negative tensor equations: algorithms and
applications, Frontiers of Mathematics in China, 10 (2015) 649-680.

\bibitem{Dai-2019}
Z. Li, Y.-H. Dai and H. Gao,
Alternating projection method for a class of tensor equations, Journal of Computational \& Applied Mathematics,
346 (2019) 490-504.

\bibitem{Li-Guan-Wang-18}
D.H. Li, H.B. Guan and X.Z. Wang, Finding a nonnegative solution to an M-tensor equation, Pacific Journal of Optimization, 16 (2020) 419-440.

\bibitem{Lim-05}
L.H. Lim, Singular values and eigenvalues of tensors, a variational approach. In: Proceedings of the 1st IEEE International Workshop on Computational Advances of Multi-tensor Adaptive Processing,
1 (2005) 129-132.

\bibitem {Liu-Li-Vong-18}
D. Liu, W. Li and S.W. Vong, The tensor splitting with application to solve multi-linear systems,
Journal of Computational \& Applied Mathematics, 330 (2018) 75-94.


\bibitem{Lv-Ma-18}
C.Q. Lv and C.F. Ma, A Levenberg-Marquardt method for solving semi-symmetric tensor equations,
Journal of Computational \& Applied Mathematics, 332 (2018) 13-25.


\bibitem{Qi05}
L. Qi. Eigenvalues of a real supersymmeytic tensor. Journal of Symbolic Computation, 40 (2005) 1302-1324.

\bibitem {Qi-Chen-Chen-18}
L. Qi,  H. Chen and Y. Chen, Tensor Eigenvalues and Their Applications, Springer, 2018.

\bibitem{Qi-Luo-17}
L. Qi and Z. Luo, Tensor Analysis, Spectral Theory and Special Tensors, SIAM Philadelphia, 2017.

\bibitem{Wang-Che-Wei-19}
X. Wang, M. Che and Y. Wei, Neural networks based approach solving multi-linear systems with M-tensors, Neurocomputing, 351 (2019) 33-42.

\bibitem {Xie-Jin-Wei-2017a}
Z.J. Xie, X.Q. Jin and Y.M. Wei, A fast algorithm for solving circulant tensor systems,
Linear \& Multilinear Algebra, 65 (2017) 1894-1904.


\bibitem {Xie-Jin-Wei-2017}
Z.J. Xie, X.Q. Jin and Y.M. Wei,
Tensor methods for solving symmetric M-tensor systems, Journal of Scientific Computing, 74 (2018) 412-425.

\bibitem {Huang-18}
Y. Xu, W. Gu and Z.H. Huang, Properties  of the nonnegative solution set of
multi-linear equations, Pacific Journal of Optimiation, 15 (2019) 441-456.


\bibitem {Yan-Ling-18}
W. Yan, C. Ling, L. Ling and H. He, Generalized tensor equations with leading structured tensors, Applied Mathematics and Computation, 361 (2019) 311-324.

\bibitem {Zhang-Qi-Zhou-2014}
L. Zhang, L. Qi and G. Zhou, M-tensor and some applications. SIAM Journal on Matrix Analysis \& Applications,
35 (2014) 437-452.
\end{thebibliography}
\end{document}